\newcounter{i} % this counter is used for macros
\newtoks\striche % this toks is used for macros
\newcommand{\smatvdots}{\vphantom{\int\limits^x}\smash{\vdots}} % hack for better spacing in smallmatrix environment
\newcommand{\R}{\mathbb{R}}
\newcommand{\N}{\mathbb{N}} % natural numbers starting with 1
\newcommand{\C}{\mathbb{C}}
\newcommand{\Lp}[1]{\mathrm{L}^{#1}} % Lp spaces
\newcommand{\diffd}{\mathrm{d}} % differential d
\newcommand{\dx}[1][x]{\,\diffd#1}
\newcommand{\cl}[2][]{\overline{#2}\ifthenelse{ \equal{#1}{} }{}{^{#1}}} % topological closure
\newcommand{\ball}{\mathrm{B}}
\DeclareMathOperator{\ran}{ran}
\DeclareMathOperator{\spn}{span}
\DeclareMathOperator{\supp}{supp}
\DeclareMathOperator{\Div}{div}
\renewcommand{\div}{\Div}
\newcommand{\grad}{\nabla}
\newcommand{\tangrad}{\nabla_{\tau}}
\newcommand{\wtangrad}{\widetilde{\nabla}_{\tau}}
\DeclareMathOperator{\rot}{curl}
\DeclarePairedDelimiter{\set}{\{}{\}}
\DeclarePairedDelimiter{\norm}{\lVert}{\rVert}
\DeclarePairedDelimiter{\abs}{\lvert}{\rvert}
\DeclarePairedDelimiterX{\dset}[2]{\{}{\}}{#1\,\delimsize\vert\,\mathopen{} #2}
\DeclarePairedDelimiterX{\scprod}[2]{\langle}{\rangle}{#1,#2}
\DeclarePairedDelimiterX{\dualprod}[2]{\langle}{\rangle}{#1,#2}
\DeclarePairedDelimiterX{\sdprod}[2]{\llangle}{\rrangle}{#1,#2} % stokes dirac structure product
\newcommand{\adjunsymb}{\ast} % if you want to change the symbol of the adjoint just change this command
\newcommand{\adjun}[1][1]{%
  \setcounter{i}{1}%
  \striche={\adjunsymb}%
  \loop%
  \ifnum\value{i}<#1%
  \striche=\expandafter{\the\expandafter\striche\adjunsymb}%
  \stepcounter{i}%
  \repeat%
  ^{\the\striche}%
}
\newcommand{\mapping}[4]{%
  \left\{%
    \begin{array}{rcl}%
      #1 &\to & #2, \\
      #3 &\mapsto & #4
    \end{array}%
  \right.%
}
\newcommand{\transposed}{^{\mathsf{T}}}
\newcommand{\trans}{\transposed}
\newcommand{\Hspace}{\mathrm{H}}
\newcommand{\cH}{\accentset{\circ}{\Hspace}}
\newcommand{\conC}{\mathrm{C}}
\newcommand{\Cc}[1][\infty]{\accentset{\circ}{\mathrm{C}}^{#1}}
\newcommand{\boundtr}[1][]{\gamma_{0}\ifthenelse{\equal{#1}{}}{}{^{#1}}}
\newcommand{\normaltr}[1][]{\gamma_{\nu}\ifthenelse{\equal{#1}{}}{}{^{#1}}}
\newcommand{\tantr}[1][]{\pi_{\tau}\ifthenelse{\equal{#1}{}}{}{^{#1}}}
\newcommand{\tanxtr}[1][]{\gamma_{\tau}\ifthenelse{\equal{#1}{}}{}{^{#1}}}
\theoremstyle{plain}% default
\newtheorem{theorem}{Theorem}[section]
\newtheorem{lemma}[theorem]{Lemma}
\newtheorem{proposition}[theorem]{Proposition}
\newtheorem{corollary}[theorem]{Corollary}
\theoremstyle{definition}
\newtheorem{definition}[theorem]{Definition}
\newtheorem{claim}{Claim}
\theoremstyle{remark}
\title[Sobolev spaces on a strong Lipschitz boundary]{Characterizations of the Sobolev space $\Hspace^{1}$ on the boundary of a strong Lipschitz domain in 3-D}
\author[Nathanael Skrepek]{Nathanael Skrepek\,\orcidlink{0000-0002-3096-4818}}
\thanks{\textit{E-mail}: \href{mailto:nathanael.skrepek@math.tu-freiberg.de}{nathanael.skrepek@math.tu-freiberg.de}}
\date{\today}
\keywords{Sobolev spaces, Lipschitz domains, Lipschitz boundaries, tangential traces, tangential gradients}
\subjclass{46E35, 46E36, 47F99}
\address{TU Bergakademie Freiberg \\
  %Fakultät für Mathematik und Informatik \\
  Institute of Applied Analysis \\
  Akademiestraße 6 \\
  D-09596 Freiberg \\
  Germany}
\email{nathanael.skrepek@math.tu-freiberg.de}
\begin{document}

\begin{abstract}
  In this work we investigate the Sobolev space $\Hspace^{1}(\partial\Omega)$ on a strong Lipschitz boundary $\partial\Omega$, i.e., $\Omega$ is a strong Lipschitz domain. In most of the literature this space is defined via charts and Sobolev spaces on flat domains.
  We show that there is a different approach via differential operators on $\Omega$ and a weak formulation directly on the boundary that leads to the same space.
  % We give different characterizations for the Sobolev spaces on the boundary.
  This second characterization of $\Hspace^{1}(\partial\Omega)$ is in particular of advantage, when it comes to traces of $\Hspace(\rot,\Omega)$ vector fields.
\end{abstract}

\maketitle

\section{Introduction}

We will give two characterizations of $\Hspace^{1}(\partial\Omega)$, where $\Omega$ is a strong Lipschitz domain. The first is given via charts, which is the usual approach in literature, and the second is a weak characterization directly on the boundary, which is related to the weak characterization of an $\Lp{2}(\partial\Omega)$ tangential trace for $\Hspace(\rot,\Omega)$ fields.

Our main motivation is that the result we present serves us to fill details in \cite[Proof of Thm.~2]{Cos90}, \cite[Section Le cas tridimensionnel]{BeBeCoDa97}, \cite[Proof of Thm.~5.1]{BuCoSh02} and \cite[Proof of Lem.~3.53]{monk}, where it is used. Unfortunately, without an explanation or a reference for its validity. Hence, we decided to address this issue.

In particular, if we regard an $f \in \Hspace^{1}(\Omega)$, then $\grad f \in \Hspace(\rot,\Omega)$ follows automatically. Every element of $\Hspace(\rot,\Omega)$ possesses a tangential trace in an abstract boundary space and therefore also $\grad f$ possesses a tangential trace.
For smooth functions the tangential trace is well defined as an element of $\Lp{2}(\partial\Omega)^{3}$. Moreover, for a smooth function the tangential trace of its gradient field coincides with the boundary gradient of its restriction to the boundary, see \Cref{le:characterization-of-tangrad}.
%(we will call this gradient the tangential gradient).
This suggests the following claim.
\begin{claim}\label{cl:motivation}
  Let $f \in \Hspace^{1}(\Omega)$. If the tangential trace of $\grad f$ belongs to $\Lp{2}(\partial\Omega)^{3}$, then $f \big\vert_{\partial\Omega}$ belongs to $\Hspace^{1}(\partial\Omega)$.
\end{claim}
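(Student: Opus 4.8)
The plan is to read \Cref{cl:motivation} off the equivalence, established later in this paper, between the chart-based definition of $\Hspace^{1}(\partial\Omega)$ and the weak characterization directly on $\partial\Omega$. That weak characterization describes $\Hspace^{1}(\partial\Omega)$ as the set of $g \in \Lp{2}(\partial\Omega)$ whose tangential gradient $\tangrad g$ exists in $\Lp{2}(\partial\Omega)^{3}$ in a distributional sense on the boundary — equivalently one may work with the $\pi/2$-rotated tangential gradient $\nu\times\tangrad g$, which has the same pointwise modulus. For $f\in\Hspace^{1}(\Omega)$ we automatically have $\grad f\in\Hspace(\rot,\Omega)$ (with $\rot\grad f=0$) and $f\big\vert_{\partial\Omega}\in\Hspace^{1/2}(\partial\Omega)\hookrightarrow\Lp{2}(\partial\Omega)$, so the whole point is to show that the tangential trace $\tanxtr(\grad f)$, a priori only an element of the abstract boundary space $\Vtaud$, is precisely the (rotated) weak tangential gradient of $g:=f\big\vert_{\partial\Omega}$.

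I would prove this identification by density. For $f\in\conC^{\infty}(\cl{\Omega})$, \Cref{le:characterization-of-tangrad} states that $\tanxtr(\grad f)$ is the classical $\Lp{2}(\partial\Omega)^{3}$ (rotated) tangential gradient of $f\big\vert_{\partial\Omega}$, and in particular
\[
  \dualprod{\tanxtr(\grad f)}{\vec{v}} = -\int_{\partial\Omega} f\,\rot_{\tau}\vec{v}\dx[S]
  \qquad\text{for all tangential test fields }\vec{v},
\]
where $\dualprod{\cdot}{\cdot}$ is the pairing of $\Vtaud$ with $\Vtau$ and $\rot_{\tau}$ is the scalar surface rotation. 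Both sides are continuous in $f$ with respect to the $\Hspace^{1}(\Omega)$-norm: the left-hand side because $f\mapsto\grad f$ is bounded $\Hspace^{1}(\Omega)\to\Hspace(\rot,\Omega)$ and $\tanxtr\colon\Hspace(\rot,\Omega)\to\Vtaud$ is bounded, the right-hand side because the Dirichlet trace $\Hspace^{1}(\Omega)\to\Lp{2}(\partial\Omega)$ is bounded. Since $\conC^{\infty}(\cl{\Omega})$ is dense in $\Hspace^{1}(\Omega)$ for a strong Lipschitz domain, the displayed identity persists for every $f\in\Hspace^{1}(\Omega)$, which is exactly the assertion that $\tanxtr(\grad f)$ is the weak (rotated) tangential gradient of $g$.

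With the identification in hand the conclusion is immediate: the hypothesis of \Cref{cl:motivation} says $\tanxtr(\grad f)\in\Lp{2}(\partial\Omega)^{3}$, hence $g=f\big\vert_{\partial\Omega}$ has an $\Lp{2}$ weak tangential gradient and thus lies in $\Hspace^{1}(\partial\Omega)$ in the sense of the weak characterization; by its equivalence with the chart definition, $f\big\vert_{\partial\Omega}\in\Hspace^{1}(\partial\Omega)$ as claimed.

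The main obstacle is not depth but bookkeeping: one must keep straight which tangential trace is in play — the tangential projection $\tantr$, i.e.\ $u\mapsto\nu\times(u\times\nu)$, versus the rotated trace $\tanxtr$, i.e.\ $u\mapsto u\times\nu$ — because the tangential trace of a gradient field corresponds to the $\pi/2$-rotated tangential gradient, so that the integration by parts on $\partial\Omega$ against tangential test fields has to be set up consistently with this rotation. A second delicate point is that $\tanxtr(\grad f)$ is initially only a functional in $\Vtaud$, so both the limit passage and the statement ``it is an $\Lp{2}$ field'' must be phrased through the duality pairing with tangential test fields — which is precisely the language of the weak characterization, so that once the objects are matched the argument reduces to continuity plus density. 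Naturally, the genuinely substantial part of the story, namely proving that the weak and the chart-based characterizations of $\Hspace^{1}(\partial\Omega)$ coincide, is the business of the sections that follow.
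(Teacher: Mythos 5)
Your proposal is correct and ends up at the same reduction as the paper: show that $g=f\big\vert_{\partial\Omega}$ satisfies the weak boundary characterization of \Cref{def:weak-H1-on-boundary} and then invoke the equivalence \Cref{th:weak-equals-strong-H1-on-boundary}. The difference lies in how you obtain the intermediate identity
\begin{equation*}
  \scprod*{q}{\nu \times \Phi \big\vert_{\partial\Omega}}_{\Lp{2}(\partial\Omega)}
  = \scprod*{f \big\vert_{\partial\Omega}}{\nu \cdot (\rot \Phi)\big\vert_{\partial\Omega}}_{\Lp{2}(\partial\Omega)}
  \quad\text{for all}\quad \Phi \in \Cc(\R^{3})^{3}.
\end{equation*}
You route it through the abstract trace space (boundedness of $\tanxtr\colon\Hspace(\rot,\Omega)\to\Vtaud$, a surface rotation $\rot_{\tau}$, and density of smooth functions in $\Hspace^{1}(\Omega)$), whereas the paper's \Cref{th:weak-tangential-trace-of-gradient-implies-H1-on-boundary} gets it in two lines with no approximation of $f$ at all: the weak $\Lp{2}$ tangential trace of $\grad f$ is \emph{defined} by $\scprod{q}{\nu\times\Phi\vert_{\partial\Omega}}_{\Lp{2}(\partial\Omega)}=\scprod{\grad f}{\rot\Phi}_{\Lp{2}(\Omega)}-\scprod{\rot\grad f}{\Phi}_{\Lp{2}(\Omega)}$, and since $\rot\grad f=0$ and $\div\rot\Phi=0$, one application of the div--grad integration by parts formula turns the right-hand side into $\scprod{f\vert_{\partial\Omega}}{\nu\cdot(\rot\Phi)\vert_{\partial\Omega}}_{\Lp{2}(\partial\Omega)}$. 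So your density step is sound but superfluous, and it drags in precisely the machinery (the spaces $\Vtau$, $\Vtaud$, the continuity of the abstract tangential trace, a surface $\rot_{\tau}$ on a merely Lipschitz boundary) that the paper deliberately avoids; note moreover that the boundedness of the abstract trace is itself normally established via the very integration-by-parts identity you could have used directly. On the plus side, your argument does not fall into the circularity the introduction warns about, since you pass to the limit in the duality pairing rather than in $\Lp{2}(\partial\Omega)$, and your bookkeeping caveat about projected versus rotated traces is harmless because the two differ by the pointwise isometry $v\mapsto v\times\nu$. As you correctly note, the real content is \Cref{th:weak-equals-strong-H1-on-boundary}, which both routes consume as a black box at this stage.
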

However, there are two approaches to define ``\emph{the tangential trace belongs to $\Lp{2}(\partial\Omega)^{3}$}'': The strong approach via limits of smooth functions and the weak approach via a representation by an $\Lp{2}(\partial\Omega)$ inner product.
For the strong approach it is not hard to show that \Cref{cl:motivation} is true.
% Hence, the more interesting question is, if we work with the weak approach (and it is also the more relevant question).
However, it is more relevant to answer the question for the weak approach. Hence, we regard the claim with the weak characterization of $\Lp{2}$ tangential traces.

In fact both \cite{BeBeCoDa97} and \cite{monk} are using \Cref{cl:motivation} (with weak $\Lp{2}$ tangential traces) to prove that both approaches (strong and weak) to $\Lp{2}$ tangential traces lead to the same objects, i.e., weak = strong.
Hence, in order to avoid a circular argument we have to resist the temptation to prove \Cref{cl:motivation} for strong $\Lp{2}$ tangential traces and conclude it for weak by ``weak = strong''.

%Hence, we need either another proof of weak equals strong tangential trace or a proper justification (without already using that weak equals strong) for the raised claim in order to avoid circular arguments.

In order to avoid the introduction of unnecessarily many concepts, we broke down the question to its core, which is an alternative approach to $\Hspace^{1}(\partial\Omega)$, see \Cref{def:weak-H1-on-boundary}. Hence, we do not need the space $\Hspace(\rot,\Omega)$ and the abstract tangential trace at all, although these notions are the origin of the question. Nevertheless, in \Cref{sec:back-to-original-question} we come back to the original question and show that \Cref{cl:motivation} holds true.

\section{Strong Lipschitz boundaries}

Recall the definition of a strong Lipschitz domain, see, e.g., \cite{grisvard1985}.

\begin{definition}\label{def:strong-lipschitz-domain}
  Let $\Omega$ be an open subset of $\R^{d}$. We say $\Omega$ is a \emph{strong Lipschitz domain}, if for every $p \in \partial\Omega$ there exist $\epsilon,h > 0$, a hyperplane $W = \spn\set{w_{1},\dots,w_{d-1}}$, where $\set{w_{1},\dots,w_{d-1}}$ is an orthonormal basis of $W$, and a Lipschitz continuous function $a\colon (p + W) \cap \ball_{\epsilon}(p) \to (-\frac{h}{2},\frac{h}{2})$ such that
  \begin{align*}
    %\label{eq:lipschitz-domain-condition-1}
    \partial\Omega \cap C_{\epsilon,h}(p) &= \dset{x + a(x)v}{x \in (p+W) \cap \ball_{\epsilon}(p)}, \\
    %\label{eq:lipschitz-domain-condition-2}
    \Omega \cap C_{\epsilon,h}(p) &= \dset{x + sv}{x \in (p+W) \cap \ball_{\epsilon}(p), -h < s < a(x)},
  \end{align*}
  where $v$ is the normal vector of $W$ and $C_{\epsilon,h}(p)$ is the cylinder $\dset{x + \delta v}{x \in (p+ W) \cap \ball_{\epsilon}(p), \delta \in (-h,h)}$.

  The boundary $\partial\Omega$ is then called \emph{strong Lipschitz boundary}.
\end{definition}

% Note that the second equality in~\eqref{eq:lipschitz-domain-condition-1} is only for technical reasons and can be droped. If it is droped, we can recover it by shrinking $\epsilon$.
Note that the condition $\abs{a} < \frac{h}{2}$ is not really necessary, however it reduces technical constructions. If it was not already satisfied, we can force it by shrinking $\epsilon$.

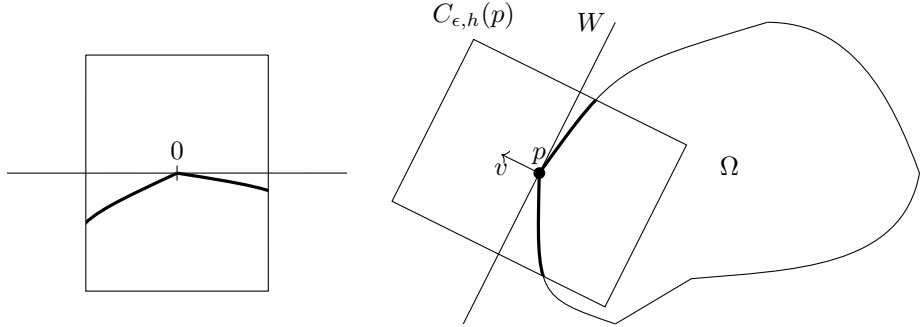
\begin{figure}
  \newcommand{\nvec}{v}
  \centering
  \begin{tikzpicture}[scale=2]
    \coordinate (vec-n) at (-1,0.5); % normal vector
    \coordinate (vec-t) at (0.5,1); % tangential vector

    \coordinate[label=above:$p$] (p) at (0.5,1);
    \coordinate[label=below:$\nvec$] (n) at ($(p) + 0.25*(vec-n)$);

    \coordinate (H1) at ($(p) - (vec-t)$);
    \coordinate[label=left:$W$] (H2) at ($(p) + (vec-t)$);

    \coordinate (C1) at ($(p) + 0.6*0.89442719*(vec-t) + 0.7*(vec-n)$);
    \coordinate (C2) at ($(p) + 0.6*0.89442719*(vec-t) - 0.7*(vec-n)$);
    \coordinate (C3) at ($(p) - 0.6*0.89442719*(vec-t) - 0.7*(vec-n)$);
    \coordinate (C4) at ($(p) - 0.6*0.89442719*(vec-t) + 0.7*(vec-n)$);

    % \node[left] at ($(p) + 0.6*(-1,0)$) {$\mathllap{\ball_{\epsilon}(p)}$};
    % \draw (p) circle (0.6); % Ball around p

    \draw (H1) -- (H2);

    %\tikzstyle{transformation} = [rotate=-63.43, shift={(0,3)}]
    \tikzstyle{transformation} = [shift={(-3,1)},rotate=-63.43]

    \draw ([transformation]p) + (0,0.05) -- ++(0,-0.05);
    \node[shift={(0,0.3)}] at ([transformation]p) {$0$};
    %\node at ($([transformation]p) + (-0.4,-0.3)$) {$a$};
    \draw ([transformation]C1) -- ([transformation]C2) -- ([transformation]C3) -- ([transformation]C4) -- ([transformation]C1);
    \draw ([transformation]H1) -- ([transformation]H2);
    \begin{scope}
      \clip ([transformation]C1) -- ([transformation]C2) -- ([transformation]C3) -- ([transformation]C4) -- ([transformation]C1);
      \draw[transformation,very thick] (1,0) .. controls (0.48,0.18) .. ([transformation]p);
      \draw[transformation,very thick] ([transformation]p) .. controls (1,1.7) .. (2,2);
    \end{scope}

    \draw (C1) -- (C2) -- (C3) -- (C4) -- (C1);
    \node[above] at (C1) {$C_{\epsilon,h}(p)$};

    \draw[->] (p) -- (n); % normal vector

    %\draw (1,0) -- (p) -- (2,2);
    \draw (1,0) .. controls (0.48,0.18) .. (p);
    \draw (p) .. controls (1,1.7) .. (2,2);
    \draw
    (2,2) to [out=0,in=110] (3,1)
    (3,1) to [out=-100,in=5] (1.5,0.3)
    (1.5,0.3) to (1,0);

    \begin{scope}
      \clip (C1) -- (C2) -- (C3) -- (C4) -- (C1);
      \draw[very thick] (1,0) .. controls (0.48,0.18) .. (p);
      \draw[very thick] (p) .. controls (1,1.7) .. (2,2);
    \end{scope}

    \node at (1.75,1.05) {$\Omega$};
    \filldraw (p) circle (0.1em); % mark of p
  \end{tikzpicture}
  \caption{Lipschitz boundary}
  \label{fig:lipschitz-boundary}
\end{figure}

Locally the boundary is given by the graph of a Lipschitz function, see \Cref{fig:lipschitz-boundary}. Therefore, we can define Lipschitz charts on $\partial\Omega$ in the following way. Let $p$, $C_{\epsilon,h}(p)$, $W$, $v$, $a$ be as in \Cref{def:strong-lipschitz-domain}. We will also denote the matrix that contains the orthonormal basis of $W$ as columns by $W$, i.e., $W \in \R^{d \times (d-1)}$. Hence, the mapping $\zeta \mapsto W\trans \zeta$ gives the coordinates (w.r.t.\ the basis $w_{1},\dots,w_{d-1}$) of the orthogonal projection of $\zeta$ on the hyperplane $W$.
% This allows us to write a local chart as
We introduce a \emph{strong Lipschitz chart} locally at $p$ by
\begin{align*}
  k\colon
  \mapping{\partial\Omega \cap C_{\epsilon,h}(p)}{\ball_{\epsilon}(0) \subseteq \R^{d-1}}{\zeta}{W\trans(\zeta-p).}
\end{align*}
We say that $\Gamma \coloneqq \partial\Omega \cap C_{\epsilon,h}(p)$ is the \emph{chart domain} of $k$. Also every restriction of a chart to an open non-empty $\hat{\Gamma} \subseteq \Gamma$ (w.r.t.\ the trace topology) is again a chart with chart domain $\hat{\Gamma}$. The corresponding inverse chart is given by
\begin{align*}
  k^{-1} \colon
  \mapping{\ball_{\epsilon}(0) \subseteq \R^{d-1}}{\partial\Omega \cap C_{\epsilon,h}(p)}{x}{p + \sum_{i=1}^{d-1} x_{i} w_{i} + a(p + \sum_{i=1}^{d-1} x_{i} w_{i}) v.}
\end{align*}
In the case where $k$ is a ``restricted'' chart, we have $k^{-1} \colon U \to \hat{\Gamma}$, where $U$ is an open non-empty subset of $\ball_{\epsilon}(0)$ in $\R^{d-1}$.
For notational simplicity we just write $a(x)$ instead of $a(p +\sum_{i=1}^{d-1} x_{i} w_{i})$. By this convention we have $a \colon U \subseteq \R^{d-1} \to \R$.
%However, by translations and restrictions we will sometimes let $a$ be a mapping from an arbitrary open set $U \subseteq \R^{d-1}$ to $\R$.

% FUER DIE AUSFUEHRUNGEN HIER NICHT WICHTIG
% We can even extend $k^{-1}$ to a Lipschitz diffeomorphism by
% \begin{align*}
%   K^{-1}\colon
%   \mapping{\ball_{\epsilon}(0) \times (-\frac{h}{2},\frac{h}{2})}{C_{\epsilon,h}(p)}{(x,x_{d})}{p + W x + (a(x)+ x_{d}) v.}
% \end{align*}
% Note that $K^{-1}(x,x_{d}) = k^{-1}(x) + x_{d} v$. Its inverse is given by
% \begin{align*}
%   K\colon
%   \begingroup\renewcommand*{\arraystretch}{1.2}
%   \mapping{\ran K^{-1}}{\ball_{\epsilon}(0) \times (-\frac{h}{2}, \frac{h}{2})}{\zeta}{\big(W\trans(\zeta - p), v\trans (k^{-1}(W\trans(\zeta-p)) - \zeta)\big).}
%   \endgroup
% \end{align*}
% Here the condition $\abs{a} < \frac{h}{2}$ ensures that $\ran K^{-1} \subseteq C_{\epsilon,h}(p)$ and in particular $\ran K^{-1} \cap \partial\Omega = \ran k^{-1}$ ($\ran K^{-1}$ does not accidentially intersect other parts of the boundary).

Note that in fact $W$, $v$ and $p$ establish an alternative coordinate system with origin $p$. Hence, by translation and rotation we can, most of the time, assume (w.l.o.g.) that $W = (e_{1},\dots, e_{d-1})$, $v = e_{d}$ and $p = 0$. This will also better transport the essence of our ideas. In this coordinate system we have
\begin{align*}
  k\left(
  \begin{bmatrix}
    \zeta_{1} \\ \vdots \\ \zeta_{d}
  \end{bmatrix}
  \right) =
  \begin{bmatrix}
    \zeta_{1} \\ \vdots \\ \zeta_{d-1}
  \end{bmatrix}
  \quad\text{and}\quad
  k^{-1}(x) =
  \begin{bmatrix}
    x \\ a(x)
  \end{bmatrix}.
\end{align*}
% And the extensions are given by
% \begin{align*}
%   K(\zeta) =
%   \begin{bmatrix}
%     \zeta_{1} \\ \vdots \\ \zeta_{d-1} \\ a(\zeta_{1}, \dots,\zeta_{d-1}) - \zeta_{d}
%   \end{bmatrix}
%   \quad\text{and}\quad
%   K^{-1}(x,x_{d}) =
%   \begin{bmatrix}
%     x \\ a(x) + x_{d}
%   \end{bmatrix}.
% \end{align*}
However, sometimes it is not entirely obvious that we can reduce the general setting to this situation or the justification that such a reduction is valid is as difficult as working in the general setting in the first place. Hence, for completeness we will repeat the tricky parts for the general setting in the appendix.

\medskip

Let $k \colon \Gamma \to U$ be a strong Lipschitz chart. The surface measure on $\partial\Omega$ is locally given by
\begin{align*}
  \mu(\Upsilon) = \int_{k(\Upsilon)} \sqrt{\det (\diffd k^{-1})\trans \diffd k^{-1}} \dx[\uplambda_{d-1}]
  \quad\text{for}\quad \Upsilon \subseteq \Gamma,
\end{align*}
where $\uplambda_{d-1}$ is the Lebesgue measure in $\R^{d-1}$. The surface measure is then defined by a partition of $\partial\Omega$. The surface measure is independent of the partition and the charts, see \Cref{th:surface-measure-independent-of-charts}.
Hence, we can switch between the inner products of $\Lp{2}(\Gamma)$ and $\Lp{2}(U)$ by
\begin{align*}
  \scprod{f}{g}_{\Lp{2}(\Gamma)} = \scprod[\Big]{f \circ k^{-1}}{\sqrt{\det (\diffd k^{-1})\trans \diffd k^{-1}}\, g \circ k^{-1}}_{\Lp{2}(U)}.
\end{align*}

\section{Preparation and main result}

We will use for spaces with homogeneous boundary conditions the same notation as in \cite{BaPaScho16}: For an open set $M \subseteq \R^{d}$ we denote the set of $\conC^{\infty}$ functions with compact support in $M$ by
\begin{equation*}
  \Cc(M) \coloneqq \dset{\Phi \in \conC^{\infty}(\R^{d})}{\supp \Phi \subseteq M \;\text{and}\; \supp \Phi \;\text{is compact}}.
\end{equation*}
Moreover, we denote the standard $\Lp{2}(M)$ first order Sobolev space by $\Hspace^{1}(M)$ and
\begin{equation*}
  \cH^{1}(M) \coloneqq \mathrlap{\cl[\Hspace^{1}(M)]{\phantom{\Cc(M)}}} \mathrlap{\Cc(M)} \phantom{\cl[\Hspace^{1}(M)]{\Cc(M)}}.
  % hacky construction to avoid a lowered infinity symbol in the exponent
\end{equation*}
The circle on top of $\Hspace^{1}(M)$ indicates homogeneous boundary conditions.

\medskip

In the following we assume $\Omega \subseteq \R^{3}$ to be a strong Lipschitz domain. Moreover, we will assume that the strong Lipschitz charts $k\colon \Gamma \subseteq \partial \Omega \to U$ are of the following form
\begin{equation*}
  k^{-1} \colon \mapping{U}{\Gamma}{%
    \begin{bmatrix}
      x_{1} \\ x_{2}
    \end{bmatrix}
  }{\begin{bmatrix}
      x_{1} \\ x_{2} \\ a(x_{1},x_{2}),
    \end{bmatrix}}
\end{equation*}
where $U$ is an open subset of $\R^{2}$ and $a \colon U \to \R$ is a Lipschitz continuous mapping.
The normal vector is then given by
\begin{align*}
  \nu(k^{-1}(x_{1},x_{2})) = \frac{1}{\sqrt{1 + \norm{\grad a}^{2}}}
  \begin{bmatrix}
    - \partial_{1} a(x_{1},x_{2}) \\
    - \partial_{2} a(x_{1},x_{2}) \\
    1
  \end{bmatrix}.
\end{align*}

In \Cref{sec:general-charts} we show, which modifications have to be done when we work with ``general'' strong Lipschitz charts. We could also do everything for ``general'' strong Lipschitz charts in the first place, however it does not transport the underlying ideas that well. Also we did not want to just say that we can always reduce every thing to these ``special'' strong Lipschitz charts, as sometimes it is not obvious how this ``w.l.o.g.'' is justified.

\begin{lemma}\label{le:functional-determinant-of-chart}
  Let $k$ be a strong Lipschitz chart. Then
  \begin{align*}
    \det\big((\diffd k^{-1})\trans \diffd k^{-1}\big) = 1 + \norm{\grad a}^{2}.
  \end{align*}
\end{lemma}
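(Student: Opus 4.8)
The plan is to compute the Jacobian $\diffd k^{-1}$ explicitly and then evaluate the Gramian determinant directly. Since $a$ is only Lipschitz continuous, Rademacher's theorem guarantees that $a$ is differentiable $\uplambda_{2}$-almost everywhere on $U$, so $\diffd k^{-1}$ exists a.e.\ and the claimed identity is to be understood (and suffices) in the a.e.\ sense, which is all that is needed for the surface-measure integral. At a point of differentiability of $a$ we have
\begin{align*}
  \diffd k^{-1} =
  \begin{bmatrix}
    1 & 0 \\ 0 & 1 \\ \partial_{1} a & \partial_{2} a
  \end{bmatrix}
  \in \R^{3 \times 2}.
\end{align*}

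Next I would form the $2\times 2$ matrix $(\diffd k^{-1})\trans \diffd k^{-1}$, which is
\begin{align*}
  \begin{bmatrix}
    1 + (\partial_{1} a)^{2} & (\partial_{1} a)(\partial_{2} a) \\
    (\partial_{1} a)(\partial_{2} a) & 1 + (\partial_{2} a)^{2}
  \end{bmatrix},
\end{align*}
and compute its determinant:
\begin{align*}
  \big(1 + (\partial_{1} a)^{2}\big)\big(1 + (\partial_{2} a)^{2}\big) - (\partial_{1} a)^{2}(\partial_{2} a)^{2}
  = 1 + (\partial_{1} a)^{2} + (\partial_{2} a)^{2}
  = 1 + \norm{\grad a}^{2},
\end{align*}
which is exactly the assertion. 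Alternatively, one could invoke the Cauchy--Binet formula: $\det\big((\diffd k^{-1})\trans \diffd k^{-1}\big)$ equals the sum of the squares of the three $2\times 2$ minors of $\diffd k^{-1}$, namely $1$, $\partial_{2}a$ and $-\partial_{1}a$, giving the same result; I would probably mention this as it generalizes cleanly.

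There is essentially no hard part here; the only points worth a word are the a.e.\ caveat coming from Lipschitz regularity, and the fact that for a chart in the reduced coordinate system (as fixed in this section) the computation is the one above, while for a ``general'' strong Lipschitz chart the columns of $W$ being orthonormal is what makes the cross terms cancel in the same way — this latter case is deferred to \Cref{sec:general-charts}.
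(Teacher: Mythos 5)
Your proof is correct and follows essentially the same route as the paper: compute $\diffd k^{-1}$, form the Gramian, and evaluate its determinant. The only cosmetic difference is that the paper writes $(\diffd k^{-1})\trans \diffd k^{-1} = I + (\grad a)(\grad a)\trans$ and delegates the last step to the general identity $\det(I+vv\trans)=1+\norm{v}^{2}$ (\Cref{le:determinant-of-I-plus-vvT}), which works in any dimension, whereas you expand the $2\times 2$ determinant directly; your remarks on the a.e.\ sense via Rademacher and on general charts are consistent with what the paper does elsewhere.
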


\begin{proof}
  Note that
  \begin{align*}
    \diffd k^{-1} =
    \begin{bmatrix}
      1 & 0 \\
      0 & 1 \\
      \partial_{1} a & \partial_{2} a
    \end{bmatrix}
    \quad\text{and}\quad
    (\diffd k^{-1})\trans \diffd k^{-1}
    =
    \begin{bmatrix}
      1 & 0 \\
      0 & 1
    \end{bmatrix}
    +
    \begin{bmatrix}
      \partial_{1} a \\ \partial_{2} a
    \end{bmatrix}
    \begin{bmatrix}
      \partial_{1} a & \partial_{2} a
    \end{bmatrix}.
  \end{align*}
  Hence, \Cref{le:determinant-of-I-plus-vvT} implies the claim.
\end{proof}

Recall the Moore-Penrose inverse: For an injective matrix $A$ it is given by $A^{\dagger} = (A\trans A)^{-1} A\trans$. Our first approach to the first order Sobolev space on $\partial\Omega$ is well-known, see, e.g., \cite[beginning of Sec.~3]{BuCoSh02}, \cite[Def.~1.3.3.2]{grisvard1985} or \cite[after Thm.~4.10]{necas2012}.

\begin{definition}\label{def:strong-H1-on-boundary}
  Let $f \in \Lp{2}(\partial\Omega)$. We say $f \in \Hspace^{1}(\partial\Omega)$, if for every strong Lipschitz chart $k \colon \Gamma \to U$ we have $f \circ k^{-1} \in \Hspace^{1}(U)$. The tangential gradient is then defined by
  \[
    (\tangrad f) \big\vert_{\Gamma} = \big[\diffd (f \circ k^{-1}) (\diffd k^{-1})^{\dagger}\big]\trans \circ k = \big[{(\diffd k^{-1})^{\dagger}}\trans \grad_{\R^{2}}(f \circ k^{-1}) \big]\circ k.
  \]
  We endow $\Hspace^{1}(\partial\Omega)$ with the following norm
  \[
    \norm{f}_{\Hspace^{1}(\partial\Omega)} = \sqrt{\norm{f}_{\Lp{2}(\partial\Omega)}^{2} + \norm{\tangrad f}_{\Lp{2}(\partial\Omega)}^{2}}.
  \]
\end{definition}

Note that, if the previous definition is true for a set of charts whose chart domains cover $\partial\Omega$, then it is already true for all charts. Moreover, the definition of the tangential gradient is independent of the chart, see \Cref{th:tangential-derivative-independent-of-charts}.

\medskip
Note that for a.e.\ $\zeta \in \partial\Omega$ the tangential space is spanned by the columns of $\diffd k^{-1}(k(\zeta))$. We denote the space of all $\Lp{2}(\partial\Omega)$ vector fields that are point wise a.e.\ in the tangential space by
\begin{align*}
  \Lp{2}_{\tau}(\partial\Omega) \coloneqq \dset{g \in \Lp{2}(\partial\Omega)^{3}}{\nu \cdot g = 0}.
\end{align*}
By construction $\tangrad f$ belongs to $\Lp{2}_{\tau}(\partial\Omega)$. This can be seen by
\begin{equation*}
  (\nu \cdot \tangrad f) \circ k^{-1} = \nu \circ k^{-1} \cdot \diffd k^{-1} \big((\diffd k^{-1})\trans \diffd k^{-1}\big)^{-1} \grad_{\R^{2}}(f \circ k^{-1}) = 0,
\end{equation*}
because $\nu \circ k^{-1} \perp \diffd k^{-1}$ by definition.

The orthogonal projection on $\Lp{2}_{\tau}(\partial\Omega)$ is given by $q \mapsto (\nu \times q) \times \nu$. For a $Q \in \Cc(\R^{3})^{3}$ we define the \emph{tangential trace} by
\begin{align*}
  \tantr Q \coloneqq \big(\nu \times Q \big\vert_{\partial\Omega}\big) \times \nu
\end{align*}
For smooth functions $F \in \Cc(\R^{3})$ the next lemma shows that the tangential gradient on $\partial\Omega$ matches the tangential trace of the volume gradient on $\Omega$.

\begin{lemma}\label{le:characterization-of-tangrad}
  For $F \in \Cc(\R^{3})$ we have $F \big\vert_{\partial\Omega} \in \Hspace^{1}(\partial\Omega)$ and
  \begin{align*}
    \tangrad (F \big\vert_{\partial\Omega}) = \big(\nu \times (\grad F)\big\vert_{\partial\Omega}\big) \times \nu = \tantr \grad F.
  \end{align*}
\end{lemma}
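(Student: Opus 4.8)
The plan is to verify the statement chart by chart. Fix a strong Lipschitz chart $k \colon \Gamma \to U$ of the special form in the excerpt, so that $k^{-1}(x) = (x_{1},x_{2},a(x))\trans$ with $a$ Lipschitz. Since $F \in \Cc(\R^{3})$, the composition $F \circ k^{-1}$ is the composition of a smooth function with a Lipschitz map, hence Lipschitz on $U$, and therefore lies in $\sobolev{1,\infty}(U) \subseteq \Hspace^{1}(U)$ (locally; one uses that $U$ can be taken bounded, or works on relatively compact subsets). As this holds for every chart, \Cref{def:strong-H1-on-boundary} gives $F\big\vert_{\partial\Omega} \in \Hspace^{1}(\partial\Omega)$, and moreover the tangential gradient is represented in the chart by
\[
  (\tangrad (F\big\vert_{\partial\Omega})) \circ k^{-1} = {(\diffd k^{-1})^{\dagger}}\trans \grad_{\R^{2}}(F \circ k^{-1}).
\]
The remaining task is to identify the right-hand side with $\bigl((\nu \times (\grad F)\big\vert_{\partial\Omega}) \times \nu\bigr) \circ k^{-1}$.

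First I would compute $\grad_{\R^{2}}(F \circ k^{-1})$ by the chain rule: with $\diffd k^{-1}$ the $3 \times 2$ Jacobian from \Cref{le:functional-determinant-of-chart}, one has $\grad_{\R^{2}}(F \circ k^{-1})(x) = (\diffd k^{-1}(x))\trans (\grad F)(k^{-1}(x))$. Plugging this in, the chart representation of $\tangrad(F\big\vert_{\partial\Omega})$ becomes ${(\diffd k^{-1})^{\dagger}}\trans (\diffd k^{-1})\trans (\grad F \circ k^{-1})$. Now recall $(\diffd k^{-1})^{\dagger} = ((\diffd k^{-1})\trans \diffd k^{-1})^{-1} (\diffd k^{-1})\trans$, so ${(\diffd k^{-1})^{\dagger}}\trans (\diffd k^{-1})\trans = \diffd k^{-1} \bigl((\diffd k^{-1})\trans \diffd k^{-1}\bigr)^{-1} (\diffd k^{-1})\trans =: P$. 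This $P$ is exactly the orthogonal projection in $\R^{3}$ onto the column space of $\diffd k^{-1}$, i.e., onto the tangent plane, which is the orthogonal complement of $\nu \circ k^{-1}$. Hence $\tangrad(F\big\vert_{\partial\Omega}) \circ k^{-1} = P\,(\grad F \circ k^{-1})$.

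To finish, I would observe that the orthogonal projection onto $\nu^{\perp}$ can be written as $P q = q - (\nu\cdot q)\nu = (\nu \times q)\times \nu$, using the double cross product identity $(\nu \times q)\times \nu = (\nu\cdot\nu)q - (\nu\cdot q)\nu$ together with $\abs{\nu} = 1$. Applying this with $q = (\grad F)(k^{-1}(x))$ and $\nu = \nu(k^{-1}(x))$ gives
\[
  \tangrad(F\big\vert_{\partial\Omega}) \circ k^{-1} = \bigl(\nu \times (\grad F)\bigr)\times \nu \;\Big|\; \text{composed with } k^{-1},
\]
which is precisely $(\tantr \grad F) \circ k^{-1}$ by the definition of $\tantr$. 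Since the charts cover $\partial\Omega$, the identity holds $\mu$-a.e.\ on $\partial\Omega$, completing the proof.

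The only genuinely delicate point is the very first one: justifying $F \circ k^{-1} \in \Hspace^{1}(U)$ rigorously, since $a$ is merely Lipschitz (so $\diffd k^{-1}$ exists only a.e.), and making sure the chain rule $\grad(F\circ k^{-1}) = (\diffd k^{-1})\trans(\grad F \circ k^{-1})$ is valid in the weak sense for a Lipschitz inner map composed with a $C^{\infty}$ outer map. This is standard (Lipschitz functions are in $\sobolev{1,\infty}_{\mathrm{loc}}$, Rademacher's theorem, and the chain rule for Sobolev functions under Lipschitz changes of variables), but it is where the regularity bookkeeping actually happens; everything after that is linear algebra with $2\times2$ and $3\times2$ matrices and the projection identity.
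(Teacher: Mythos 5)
Your proposal is correct and follows essentially the same route as the paper: chart-wise chain rule, identifying $\diffd k^{-1}\big((\diffd k^{-1})\trans \diffd k^{-1}\big)^{-1}(\diffd k^{-1})\trans$ as the orthogonal projection onto the tangent plane (the paper cites \Cref{le:AAdagger-ortho-projection}), and recognizing $(\nu\times\cdot)\times\nu$ as the same projection (the paper cites \Cref{le:tantr-ortho-projetion-on-tangential-space}, which you replace by the equivalent triple-product identity). Your closing remark about the Sobolev chain rule for the Lipschitz map $k^{-1}$ is a fair observation about a point the paper passes over with just ``by the chain rule.''
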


\begin{proof}
  Let $k\colon \Gamma \to U$ be an arbitrary strong Lipschitz chart. Then $F \big\vert_{\partial\Omega} \circ k^{-1} = F \circ k^{-1}$ belongs to $\Hspace^{1}(U)$ by the chain rule. The tangential space at $\zeta \in \Gamma$ is given by the columns of $\diffd k^{-1}(k(\zeta))$. By construction the normal vector $\nu(\zeta)$ is orthogonal on this space.
  By \Cref{def:strong-H1-on-boundary} and the chain rule we have
  \begin{align*}
    \big(\tangrad F \big\vert_{\partial\Omega}\big)\big\vert_{\Gamma}
    = \big[\diffd (F \circ k^{-1}) (\diffd k^{-1})^{\dagger}\big]\trans \circ k
    = \big[(\diffd F \circ k^{-1})\diffd k^{-1} (\diffd k^{-1})^{\dagger}\big]\trans \circ k.
  \end{align*}
  Note that by \Cref{le:AAdagger-ortho-projection} the matrix $\diffd k^{-1} (\diffd k^{-1})^{\dagger} \circ k(\zeta)$ is the orthogonal projection on $\ran \diffd k^{-1}(k(\zeta))$. In particular this matrix is symmetric. Moreover, by \Cref{le:tantr-ortho-projetion-on-tangential-space} also $(\nu(\zeta) \times \cdot ) \times \nu(\zeta)$ is the orthogonal projection on the same space. Hence,
  \begin{align*}
    \big(\tangrad F \big\vert_{\partial\Omega}\big)\big\vert_{\Gamma}
    % \big[(\diffd p\circ k^{-1}) \diffd k^{-1} (\diffd k^{-1})^{\dagger}\big]\trans \circ k
    &= \big(\diffd k^{-1} (\diffd k^{-1})^{\dagger}\circ k\big) (\grad F \circ k^{-1})  \circ k
      = \big(\diffd k^{-1} (\diffd k^{-1})^{\dagger}\circ k\big) (\grad F)\big\vert_{\Gamma} \\
    &= \big(\nu \times (\grad F\big)\big\vert_{\Gamma}) \times \nu
      = (\tantr \grad F)\big\vert_{\Gamma}.
      \qedhere
  \end{align*}
\end{proof}

\begin{lemma}\label{le:weak-formulation-for-smooth-functions}
  Let $F \in \Cc(\R^{3})$ and $\Phi \in \Cc(\R^{3})^{3}$. Then
  \[
    \scprod*{\tantr \grad F}{\nu \times \Phi \big\vert_{\partial\Omega}}_{\Lp{2}(\partial \Omega)}
    = \scprod*{F \big\vert_{\partial\Omega}}{\nu \cdot (\rot \Phi)\big\vert_{\partial\Omega}}_{\Lp{2}(\partial\Omega)}.
  \]
\end{lemma}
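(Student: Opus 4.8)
The plan is to reduce this identity to the classical integration-by-parts (Stokes/divergence) formula on $\Omega$ and to identify the resulting surface integrals with the two sides of the claimed equation. First I would recall that for $F \in \Cc(\R^{3})$ and $\Phi \in \Cc(\R^{3})^{3}$ the vector field $G \coloneqq F\,\Phi$ (or, more precisely, the scalar $F$ multiplying $\Phi$) is smooth with compact support, so the product rule gives $\rot(F\Phi) = \grad F \times \Phi + F\,\rot\Phi$ pointwise on $\R^{3}$. Applying the divergence theorem to $\rot(F\Phi)$ on $\Omega$ — or rather the identity $\int_{\Omega} \rot G \dx = \int_{\partial\Omega} \nu \times G \dx[\mu]$, which holds for smooth compactly supported $G$ on a Lipschitz domain — yields
\[
  \int_{\partial\Omega} \nu \times (F\Phi) \dx[\mu]
  = \int_{\Omega} \grad F \times \Phi \dx + \int_{\Omega} F\,\rot\Phi \dx.
\]
This is a vector identity; to extract a scalar relation I would instead work with the scalar form $\int_{\Omega} \div H \dx = \int_{\partial\Omega} \nu \cdot H \dx[\mu]$ applied to $H \coloneqq F\Phi \times \psi$ for a constant test vector, or — cleaner — directly with the known Green-type formula $\int_{\Omega} (\rot\Phi)\cdot\Psi - \Phi\cdot\rot\Psi \dx = \int_{\partial\Omega} (\nu\times\Phi)\cdot\Psi \dx[\mu]$ and take $\Psi = F\cdot(\text{something})$. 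The most economical route, however, is the scalar identity $\div(F\,\Phi) = \grad F\cdot\Phi + F\,\div\Phi$ applied not to $\Phi$ but to $\Phi$ replaced by $\nu\times\Phi$... but $\nu$ is not smooth, so that fails; hence I commit to the curl version below.

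The key computation is the following. Since $F\Phi \in \Cc(\R^{3})^{3}$, the identity
\[
  \int_{\Omega} \rot(F\Phi)\cdot e \dx = \int_{\partial\Omega} (\nu \times F\Phi)\cdot e \dx[\mu]
\]
holds for every fixed $e \in \R^{3}$; but I only need a scalar contraction against a fixed auxiliary vector, so instead I would use the clean weak form: for $G \in \Cc(\R^{3})^{3}$,
\[
  \int_{\Omega}\rot G \dx = \int_{\partial\Omega}\nu\times G\dx[\mu].
\]
Dotting this (before integrating, via a fixed unit vector that I then let range over a basis — or simply reading off all three components) with $G = F\Phi$ and using the product rule for curl gives
\[
  \int_{\partial\Omega} \nu\times(F\,\Phi\vert_{\partial\Omega})\dx[\mu]
  = \int_{\Omega}\bigl(\grad F\times\Phi + F\,\rot\Phi\bigr)\dx.
\]
To turn the right-hand side into boundary integrals I would apply the same divergence-type identity once more, now to the smooth compactly supported field $\Phi\times\psi$ for a constant $\psi$, which produces $\int_{\Omega}\rot\Phi\cdot\psi - \Phi\cdot\rot\psi\dx$... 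Rather than iterate, the sharper move is to start from the standard vector Green formula (valid on Lipschitz domains by density of smooth functions and the usual trace theorems)
\[
  \int_{\Omega}\rot u\cdot v - u\cdot\rot v\dx
  = \int_{\partial\Omega}(\nu\times u)\cdot v\dx[\mu],
\]
and set $u = \Phi$, $v = F\,w$ for constant $w$, or — the choice that directly yields the claim — $u = F\,e_{j}$-type fields. Let me instead take $u$, $v$ so that the left side collapses to exactly the two terms wanted: with $u=\Phi$ and $v$ the gradient-type field, $\rot v$ drops because $\rot\grad = 0$. Concretely, I would apply Green's formula with $u = \Phi$ and $v = \grad F$ (both smooth, compactly supported), giving
\[
  \int_{\Omega}\rot\Phi\cdot\grad F\dx
  = \int_{\partial\Omega}(\nu\times\Phi)\cdot\grad F\dx[\mu]
\]
since $\rot\grad F = 0$.

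Now the identification step. On the boundary, $\grad F\vert_{\partial\Omega}$ differs from its tangential part by a normal component, and $\nu\times\Phi\vert_{\partial\Omega}$ is already tangential, so $(\nu\times\Phi)\cdot\grad F = (\nu\times\Phi)\cdot\tantr\grad F$ a.e.\ on $\partial\Omega$; by \Cref{le:characterization-of-tangrad} we have $\tantr\grad F = \tangrad(F\vert_{\partial\Omega})$, which lies in $\Lp{2}_{\tau}(\partial\Omega)$. Thus
\[
  \int_{\partial\Omega}(\nu\times\Phi)\cdot\grad F\dx[\mu]
  = \scprod{\tantr\grad F}{\nu\times\Phi\vert_{\partial\Omega}}_{\Lp{2}(\partial\Omega)}.
\]
For the left-hand side I would write $\rot\Phi\cdot\grad F = \div\bigl(F\,\rot\Phi\bigr) - F\,\div\rot\Phi = \div(F\,\rot\Phi)$ since $\div\rot\Phi = 0$, and apply the scalar divergence theorem on $\Omega$ to the smooth compactly supported field $F\,\rot\Phi$:
\[
  \int_{\Omega}\rot\Phi\cdot\grad F\dx
  = \int_{\partial\Omega}\nu\cdot(F\,\rot\Phi)\dx[\mu]
  = \scprod{F\vert_{\partial\Omega}}{\nu\cdot(\rot\Phi)\vert_{\partial\Omega}}_{\Lp{2}(\partial\Omega)}.
\]
Chaining the two displays through the Green identity gives precisely the asserted equality.

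The main obstacle is the legitimacy of the divergence theorem and the vector Green formula on a mere strong Lipschitz domain $\Omega$ that need not be bounded: the cylinders in \Cref{def:strong-lipschitz-domain} are local, so I would either invoke the standard fact that the Gauss–Green formula holds on (bounded) Lipschitz domains with the surface measure $\mu$ as defined in Section 2 — this is classical, see e.g.\ \cite{grisvard1985, necas2012} — or, if unboundedness is a genuine concern, localize via a partition of unity subordinate to the chart cylinders, prove the identity in each flattened chart where it is the elementary Sobolev integration-by-parts on $U\times(-h,h)$, and sum. Since $F$ and $\Phi$ have compact support, only finitely many chart pieces are involved, so the localization is harmless. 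The only other point requiring a word is that $\div\rot\Phi = 0$ and $\rot\grad F = 0$ hold classically because $F,\Phi$ are $\conC^{\infty}$, so no regularity subtlety arises there.
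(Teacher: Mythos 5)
Your proof is correct and is essentially the paper's own argument: both rest on the integration-by-parts formula for $\rot$ applied to $\grad F$ and $\Phi$ (killing the $\rot\grad F$ term) followed by the $\div$--$\grad$ formula applied to $F$ and $\rot\Phi$ (killing the $\div\rot\Phi$ term). The only additions are your explicit observation that $(\nu\times\Phi)\cdot\grad F=(\nu\times\Phi)\cdot\tantr\grad F$ on $\partial\Omega$ --- which the paper absorbs into its statement of the curl formula --- and the (reasonable) remark about justifying Gauss--Green on a Lipschitz domain; the many false starts could simply be deleted.
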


\begin{proof}
  By the integration by parts formula for $\rot$ and $\div$-$\grad$ we have
  \begin{align*}
    \scprod*{\tantr \grad F}{\nu \times \Phi \big\vert_{\partial\Omega}}_{\Lp{2}(\partial \Omega)}
    &= \scprod{\grad F}{\rot \Phi}_{\Lp{2}(\Omega)} - \scprod{\underbrace{\rot \grad F}_{=\mathrlap{0}}}{\Phi}_{\Lp{2}(\Omega)}\\
    &= -\scprod{F}{\underbrace{\div \rot \Phi}_{=\mathrlap{0}}}_{\Lp{2}(\Omega)}
      + \scprod*{F \big\vert_{\partial\Omega}}{\nu \cdot (\rot \Phi)\big\vert_{\partial\Omega}}_{\Lp{2}(\partial\Omega)} \\
    &= \scprod*{F \big\vert_{\partial\Omega}}{\nu \cdot (\rot \Phi)\big\vert_{\partial\Omega}}_{\Lp{2}(\partial\Omega)}.
      \qedhere
  \end{align*}
\end{proof}

The previous lemma motivates the following alternative definition for $\Lp{2}(\partial\Omega)$ elements that possess a tangential gradient in a weak sense.

\begin{definition}\label{def:weak-H1-on-boundary}
  Let $\Omega$ be a strong Lipschitz domain. Then we say $f \in \tilde{\Hspace}^{1}(\partial\Omega)$,
  if there exists a $q \in \Lp{2}_{\tau}(\partial\Omega)$ such that for all $\Phi \in \Cc(\R^{3})^{3}$
  \begin{align*}
    \scprod*{q}{\nu \times \Phi \big\vert_{\partial\Omega}}_{\Lp{2}(\partial\Omega)}
    = \scprod*{f}{\nu \cdot (\rot \Phi)\big\vert_{\partial\Omega}}_{\Lp{2}(\partial\Omega)}.
  \end{align*}
  Moreover, we say $\wtangrad f = q$.
\end{definition}

Our goal will be to show that the space $\tilde{\Hspace}^{1}(\partial\Omega)$ coincides with $\Hspace^{1}(\partial\Omega)$. By \Cref{le:weak-formulation-for-smooth-functions} we see that $F \big\vert_{\partial\Omega} \in \tilde{\Hspace}^{1}(\partial\Omega)$ for every $F \in \Cc(\R^{3})$.

\begin{theorem}\label{th:smooth-functions-on-volume-dense-on-H1-boundary}
  The set $\dset[\big]{\Phi \big\vert_{\partial\Omega}}{\Phi \in \Cc(\R^{d})}$ is dense in $\Hspace^{1}(\partial\Omega)$ w.r.t.\ $\norm{\cdot}_{\Hspace^{1}(\partial\Omega)}$.
\end{theorem}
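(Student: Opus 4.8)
The plan is to prove density by a standard \emph{localization and mollification} argument, using the charts to pull everything back to flat domains in $\R^{2}$ where density of smooth functions is classical. First I would take an arbitrary $f \in \Hspace^{1}(\partial\Omega)$ and fix $\varepsilon > 0$. Using compactness of $\partial\Omega$, cover it by finitely many chart domains $\Gamma_{1},\dots,\Gamma_{N}$ with charts $k_{j}\colon \Gamma_{j} \to U_{j}$, and choose a smooth partition of unity $(\chi_{j})_{j=1}^{N}$ on $\partial\Omega$ subordinate to this cover; here one must be slightly careful, since the $\chi_{j}$ should be restrictions of functions smooth on a neighborhood in $\R^{3}$ so that multiplication preserves the relevant regularity — this is arranged by taking a partition of unity in $\R^{3}$ subordinate to the cylinders $C_{\varepsilon_{j},h_{j}}(p_{j})$. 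Then $f = \sum_{j} \chi_{j} f$, and it suffices to approximate each $\chi_{j} f$, which is supported in a single chart domain $\Gamma_{j}$.

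Next I would work in a single chart. The function $(\chi_{j} f) \circ k_{j}^{-1}$ lies in $\Hspace^{1}(U_{j})$ by \Cref{def:strong-H1-on-boundary}, and it has support compactly contained in $U_{j}$ (since $\chi_{j}$ has support compactly inside the cylinder). Hence it belongs to $\Hspace^{1}(\R^{2})$ after extension by zero, and by the classical density of $\Cc(\R^{2})$ in $\Hspace^{1}(\R^{2})$ — more precisely, mollification — we find $g_{j} \in \conC^{\infty}(\R^{2})$, still supported near $U_{j}$, with $\norm{g_{j} - (\chi_{j} f)\circ k_{j}^{-1}}_{\Hspace^{1}(\R^{2})}$ small. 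The subtlety is that $g_{j}$ lives on $U_{j} \subseteq \R^{2}$, not on $\R^{3}$: I need to transport it back to a function smooth on $\R^{3}$ whose restriction to $\partial\Omega$ is close to $\chi_{j} f$ in $\Hspace^{1}(\partial\Omega)$. The natural candidate is $\Phi_{j}(x_{1},x_{2},x_{3}) \coloneqq \psi_{j}(x_{3}) g_{j}(x_{1},x_{2})$ for a cutoff $\psi_{j} \in \Cc(\R)$ that is $1$ near the range of $a_{j}$; then $\Phi_{j} \in \Cc(\R^{3})$ and $\Phi_{j} \circ k_{j}^{-1}(x) = g_{j}(x)$ on $U_{j}$. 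Summing, $\Phi \coloneqq \sum_{j}\Phi_{j} \in \Cc(\R^{3})$ and $\Phi\big\vert_{\partial\Omega}$ should be the desired approximant.

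The main obstacle — and the step deserving the most care — is controlling the $\Hspace^{1}(\partial\Omega)$ norm of the error $\Phi\big\vert_{\partial\Omega} - f = \sum_{j}(\Phi_{j}\big\vert_{\partial\Omega} - \chi_{j}f)$ in terms of the flat $\Hspace^{1}(U_{j})$ errors. By \Cref{le:functional-determinant-of-chart} the surface-measure density $\sqrt{1+\norm{\grad a_{j}}^{2}}$ is bounded above and below (the Lipschitz constant of $a_{j}$ controls it), so the $\Lp{2}(\Gamma_{j})$ and $\Lp{2}(U_{j})$ norms are equivalent with uniform constants. For the tangential-gradient term one uses the formula $\tangrad h\big\vert_{\Gamma_{j}} = \big[(\diffd k_{j}^{-1})^{\dagger}\big]\trans \grad_{\R^{2}}(h\circ k_{j}^{-1}) \circ k_{j}$ from \Cref{def:strong-H1-on-boundary}; since $\diffd k_{j}^{-1}$ and hence $(\diffd k_{j}^{-1})^{\dagger} = ((\diffd k_{j}^{-1})\trans \diffd k_{j}^{-1})^{-1}(\diffd k_{j}^{-1})\trans$ have entries bounded in $\Lp{\infty}$ (again by the Lipschitz bound on $a_{j}$), the map $h \mapsto \tangrad h$ is bounded $\Hspace^{1}(U_{j}) \to \Lp{2}_{\tau}(\Gamma_{j})$ with a constant depending only on the finitely many chart data. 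Therefore $\norm{\Phi_{j}\big\vert_{\partial\Omega} - \chi_{j}f}_{\Hspace^{1}(\partial\Omega)} \le C_{j}\norm{g_{j} - (\chi_{j}f)\circ k_{j}^{-1}}_{\Hspace^{1}(U_{j})}$, which was made $< \varepsilon/(NC_{j})$; summing over $j$ gives $\norm{\Phi\big\vert_{\partial\Omega} - f}_{\Hspace^{1}(\partial\Omega)} < \varepsilon$, completing the proof. One should also double-check that $\Phi_{j}\big\vert_{\partial\Omega} \in \Hspace^{1}(\partial\Omega)$ — this is already guaranteed by \Cref{le:characterization-of-tangrad} since $\Phi_{j} \in \Cc(\R^{3})$ — and that the partition-of-unity multiplication is legitimate, i.e.\ that $\chi_{j}f \in \Hspace^{1}(\partial\Omega)$, which follows from the product rule applied in each chart.
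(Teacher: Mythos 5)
Your proposal is correct and follows essentially the same route as the paper: a finite cover by chart cylinders, an ambient partition of unity, pullback of each localized piece to the flat chart domain where compactly supported smooth approximants exist, and a lift back to $\Cc(\R^{3})$ by extending constantly in the vertical direction with a cutoff. The only (welcome) difference is that you spell out the norm-comparison step — boundedness of $\sqrt{1+\norm{\grad a_{j}}^{2}}$ and of $(\diffd k_{j}^{-1})^{\dagger}$ — which the paper compresses into ``by construction''.
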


\begin{proof}
  By the definition of a strong Lipschitz domain we have for every $\zeta \in \partial\Omega$, a hyperplane $W$, a cylinder $C_{\epsilon,h}(\zeta)$ ($\epsilon$ and $h$ depend on $\zeta$), and a chart $k\colon \Gamma \to \ball_{\epsilon}(0)$, where $\Gamma = \partial\Omega \cap C_{\epsilon,h}(\zeta)$. Hence, we can cover $\partial\Omega$ by $\bigcup_{\zeta \in \partial\Omega} C_{\epsilon,h}(\zeta)$ and consequently there is a finite subcover $\bigcup_{i=1}^{m} C_{\epsilon_{i},h_{i}}(p_{i})$.
  %We define $h_{0} = \min_{i=1,\dots,m} h_{i}$. Then also $\ball_{\frac{h_{0}}{2}}(\partial\Omega)$ is covered by this covering.
  We employ a partition of unity and obtain $(\alpha_{i})_{i=1}^{m}$, subordinate to this subcover, i.e.,
  \begin{align*}
    \alpha_{i} \in \Cc\big(C_{\epsilon_{i},h_{i}}(p_{i})\big), \quad \alpha_{i}(\zeta) \in [0,1], \quad\text{and}\quad \sum_{i=1}^{m} \alpha_{i}(\zeta) = 1
    \quad\text{for all}\quad \zeta \in
    % \ball_{\frac{h_{0}}{2}}(\partial\Omega).
    \partial\Omega.
  \end{align*}
  For $f \in \Hspace^{1}(\partial\Omega)$ we define $f_{i} = \alpha_{i}\big\vert_{\partial\Omega} f$. It is straightforward to show that also $f_{i} \in \Hspace^{1}(\partial\Omega)$.
  We define $\Gamma_{i} = \partial\Omega \cap C_{\epsilon_{i},h_{i}}(p_{i})$ and the corresponding chart $k_{i}\colon \Gamma_{i} \to \ball_{\epsilon_{i}}(0) \subseteq \R^{d-1}$.
  Note that $\alpha_{i}\big\vert_{\partial\Omega}$ has compact support in $\Gamma_{i}$. Therefore, $f_{i} \circ k_{i}^{-1}$ has compact support in $\ball_{\epsilon_{i}}(0)$ and $f_{i} \circ k_{i}^{-1} \in \cH^{1}(\ball_{\epsilon_{i}}(0))$. This implies that there exists a sequence $(\varphi_{i,n})_{n\in\N}$ in $\Cc(\ball_{\epsilon_{i}}(0))$ that converges to $f_{i} \circ k_{i}^{-1}$ w.r.t.\ $\norm{\cdot}_{\Hspace^{1}(\ball_{\epsilon_{i}}(0))}$.
  % We can extend $\varphi_{i,n} \circ k_{i}$ outside of $\Gamma_{i}$ by zero, which gives an element of $\Hspace^{1}(\partial\Omega)$

  We can define an extension of $\varphi_{i,n}$ on $\R^{d}$ with support on a strip by
  \begin{align}\label{eq:define-extension-on-strip}
    \Phi_{i,n}\left(
    \begin{bsmallmatrix}
      \zeta_{1} \\ \smatvdots \\ \zeta_{d}
    \end{bsmallmatrix}
    \right)
    = \varphi_{i,n}\left(
    \begin{bsmallmatrix}
      \zeta_{1} \\ \smatvdots \\ \zeta_{d-1}
    \end{bsmallmatrix}
    \right)
    \quad\text{or}\quad
    \Phi_{i,n}(\zeta) = \varphi_{i,n}(W\trans (\zeta - p_{i}))
  \end{align}
  in the general coordinates.
  Hence, $\Phi_{i,n} \in \conC^{\infty}(\R^{d})$.
  Note that
  %support of $\Phi_{i,n}$ is contained in the strip and
  we do not want that $\supp \Phi_{i,n}$ intersects $\partial\Omega$ outside of $\Gamma_{i}$. Thus, we multiply $\Phi_{i,n}$ by a suitable $\Cc$ cutoff function that is $1$ in a neighborhood of $\Gamma_{i}$ (for all $n\in\N$ the same cutoff function). Consequently, we even have $\Phi_{i,n} \in \Cc(\R^{d})$.

  By construction we have $\Phi_{i,n} \big\vert_{\Gamma_{i}} = \varphi_{i,n} \circ k_{i}$ and $\Phi_{i,n} \big\vert_{\partial\Omega} \to f_{i}$ in $\Hspace^{1}(\partial\Omega)$.
  %for every $i \in \set{1,\dots, m}$.
  Now we define $\Phi_{n} = \sum_{i=1}^{m} \Phi_{i,n} \in \Cc(\R^{d})$ and obtain $\Phi_{n}\big\vert_{\partial\Omega} \to f$ in $\Hspace^{1}(\partial\Omega)$.
\end{proof}

The density of $\dset{\Phi \big\vert_{\partial\Omega}}{\Phi \in \Cc(\R^{d})}$ implies that every $f \in \Hspace^{1}(\partial\Omega)$ is automatically also in $\tilde{\Hspace}^{1}(\partial\Omega)$, as the following corollary shows.

\begin{corollary}\label{th:weak-tangential-gradient-equals-strong-tangential-gradient}
  $\Hspace^{1}(\partial\Omega) \subseteq \tilde{\Hspace}^{1}(\partial\Omega)$ and $\tangrad f = \wtangrad f$ for all $f \in \Hspace^{1}(\partial\Omega)$.
\end{corollary}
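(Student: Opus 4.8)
The plan is to read the inclusion straight off the density theorem together with the two identities already established for smooth functions, and then pass to the limit. Let $f \in \Hspace^{1}(\partial\Omega)$. By \Cref{th:smooth-functions-on-volume-dense-on-H1-boundary} there is a sequence $(\Phi_{n})_{n\in\N}$ in $\Cc(\R^{3})$ with $\Phi_{n}\big\vert_{\partial\Omega} \to f$ in $\norm{\cdot}_{\Hspace^{1}(\partial\Omega)}$. By the very form of the $\Hspace^{1}(\partial\Omega)$-norm and the linearity of $\tangrad$, this is the same as saying $\Phi_{n}\big\vert_{\partial\Omega} \to f$ in $\Lp{2}(\partial\Omega)$ and $\tangrad(\Phi_{n}\big\vert_{\partial\Omega}) \to \tangrad f$ in $\Lp{2}_{\tau}(\partial\Omega)$; by \Cref{le:characterization-of-tangrad} the second convergence also reads $\tantr \grad \Phi_{n} \to \tangrad f$.

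Next I would fix an arbitrary test field $\Psi \in \Cc(\R^{3})^{3}$ and apply \Cref{le:weak-formulation-for-smooth-functions} to each $\Phi_{n}$, which yields
\[
  \scprod*{\tantr \grad \Phi_{n}}{\nu \times \Psi\big\vert_{\partial\Omega}}_{\Lp{2}(\partial\Omega)}
  = \scprod*{\Phi_{n}\big\vert_{\partial\Omega}}{\nu \cdot (\rot \Psi)\big\vert_{\partial\Omega}}_{\Lp{2}(\partial\Omega)}.
\]
Since $\nu$ is bounded and $\Psi$ has compact support, both $\nu \times \Psi\big\vert_{\partial\Omega}$ and $\nu \cdot (\rot \Psi)\big\vert_{\partial\Omega}$ are fixed $\Lp{2}(\partial\Omega)$-functions, so by continuity of the inner product in one argument the left-hand side converges to $\scprod{\tangrad f}{\nu \times \Psi\big\vert_{\partial\Omega}}_{\Lp{2}(\partial\Omega)}$ and the right-hand side to $\scprod{f}{\nu \cdot (\rot \Psi)\big\vert_{\partial\Omega}}_{\Lp{2}(\partial\Omega)}$ as $n \to \infty$. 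Passing to the limit therefore gives
\[
  \scprod*{\tangrad f}{\nu \times \Psi\big\vert_{\partial\Omega}}_{\Lp{2}(\partial\Omega)}
  = \scprod*{f}{\nu \cdot (\rot \Psi)\big\vert_{\partial\Omega}}_{\Lp{2}(\partial\Omega)}
\]
for every $\Psi \in \Cc(\R^{3})^{3}$.

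Since $\tangrad f \in \Lp{2}_{\tau}(\partial\Omega)$, this is exactly the defining property of \Cref{def:weak-H1-on-boundary} with $q = \tangrad f$; hence $f \in \tilde{\Hspace}^{1}(\partial\Omega)$ and $\wtangrad f = \tangrad f$. I do not expect any genuine obstacle here: the only point needing a sentence of care is the interchange of limit and inner product, and that is immediate because the ``test'' objects $\nu \times \Psi\big\vert_{\partial\Omega}$ and $\nu \cdot (\rot \Psi)\big\vert_{\partial\Omega}$ lie in $\Lp{2}(\partial\Omega)$ (they even have compact support, so this also works if $\partial\Omega$ is unbounded). If one additionally wants $\wtangrad f$ to be unambiguously defined as an operator, one checks that $q$ is unique, i.e.\ that $\dset[\big]{\nu \times \Psi\big\vert_{\partial\Omega}}{\Psi \in \Cc(\R^{3})^{3}}$ has dense span in $\Lp{2}_{\tau}(\partial\Omega)$, but this is not needed for the stated inclusion.
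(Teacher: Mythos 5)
Your proposal is correct and follows exactly the paper's own argument: density of restrictions of $\Cc(\R^{3})$ functions, the identity $\tangrad(\Phi_{n}\big\vert_{\partial\Omega}) = \tantr\grad\Phi_{n}$ from \Cref{le:characterization-of-tangrad}, the weak formulation from \Cref{le:weak-formulation-for-smooth-functions}, and passage to the limit by continuity of the inner product. The extra remark on uniqueness of $q$ is a sensible aside but, as you note, not needed for the stated inclusion.
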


\begin{proof}
  Let $f \in \Hspace^{1}(\partial\Omega)$. Then by \Cref{th:smooth-functions-on-volume-dense-on-H1-boundary} there exists a sequence $(F_{n})_{n\in\N}$ in $\Cc(\R^{3})$ such that $F_{n}\big\vert_{\partial\Omega} \to f$ w.r.t.\ $\norm{\cdot}_{\Hspace^{1}(\partial\Omega)}$. Hence, by \Cref{le:characterization-of-tangrad} and \Cref{le:weak-formulation-for-smooth-functions} we have for every $\Phi \in \Cc(\R^{3})$
  \begin{align*}
    \scprod{\tangrad f}{\nu \times \Phi}_{\Lp{2}(\partial\Omega)}
    &= \lim_{n\to\infty}\scprod{\tangrad F_{n}\big\vert_{\partial\Omega}}{\nu \times \Phi}_{\Lp{2}(\partial\Omega)}
    = \lim_{n \to \infty} \scprod{\tantr \grad F_{n}}{\nu \times \Phi}_{\Lp{2}(\partial\Omega)} \\
    &= \lim_{n \to \infty} \scprod{F_{n}\big\vert_{\partial\Omega}}{\nu \cdot (\rot \Phi)\big\vert_{\partial\Omega}}_{\Lp{2}(\partial\Omega)}
      = \scprod{f}{\nu \cdot (\rot \Phi)\big\vert_{\partial\Omega}}_{\Lp{2}(\partial\Omega)},
  \end{align*}
  which implies $f \in \tilde{\Hspace}^{1}(\partial\Omega)$ and $\tangrad f = \wtangrad f$.
\end{proof}

The next two lemmas are the foundation of the main result.
The second of these lemmas gives a lifting of a smooth function $\varphi$ on a flat domain in $\R^{2}$ to a smooth function $\Phi$ on $\R^{3}$ such that the twisted tangential trace of the lifting $\Phi$ equals the tangential field that corresponds to $\varphi$ (i.e., $\diffd k^{-1} \varphi$). This automatically gives an identity for the $\R^{2}$ divergence of $\varphi$ and in terms of $\Phi$.

\begin{lemma}\label{le:tanxtr-as-linear-combination-of-tangential-vectors}
  Let $k\colon \Gamma \to U$ be a strong Lipschitz chart. Then for every $\varphi \in \Cc(U)$ we have
  \begin{align*}
    \frac{1}{\sqrt{\det\big((\diffd k^{-1})\trans \diffd k^{-1}\big)}} \diffd k^{-1} \varphi
    = (\nu \circ k^{-1})\times
    \begin{bmatrix}
      \varphi_{2} \\
      -\varphi_{1} \\
      0
    \end{bmatrix}.
  \end{align*}
\end{lemma}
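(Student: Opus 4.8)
The plan is to compute both sides of the claimed identity in the special coordinates where $k^{-1}(x) = (x_1, x_2, a(x_1,x_2))^\trans$, and then observe that the identity is coordinate-free (both sides transform tensorially under change of chart) so that the special case suffices; alternatively one can carry the general coordinates through directly, which is not much harder here. I would start from the right-hand side: with $\nu \circ k^{-1} = \frac{1}{\sqrt{1+\norm{\grad a}^2}}(-\partial_1 a, -\partial_2 a, 1)^\trans$ by the formula recorded just before \Cref{le:functional-determinant-of-chart}, I compute the cross product
\[
  \begin{bmatrix} -\partial_1 a \\ -\partial_2 a \\ 1 \end{bmatrix}
  \times
  \begin{bmatrix} \varphi_2 \\ -\varphi_1 \\ 0 \end{bmatrix}
  =
  \begin{bmatrix}
    (-\partial_2 a)\cdot 0 - 1\cdot(-\varphi_1) \\
    1\cdot \varphi_2 - (-\partial_1 a)\cdot 0 \\
    (-\partial_1 a)(-\varphi_1) - (-\partial_2 a)\varphi_2
  \end{bmatrix}
  =
  \begin{bmatrix}
    \varphi_1 \\
    \varphi_2 \\
    \varphi_1 \partial_1 a + \varphi_2 \partial_2 a
  \end{bmatrix}.
\]
On the other hand, $\diffd k^{-1} = \begin{bsmallmatrix} 1 & 0 \\ 0 & 1 \\ \partial_1 a & \partial_2 a \end{bsmallmatrix}$ (as noted in the proof of \Cref{le:functional-determinant-of-chart}), so $\diffd k^{-1}\varphi = (\varphi_1, \varphi_2, \varphi_1\partial_1 a + \varphi_2 \partial_2 a)^\trans$, which is exactly the vector above. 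Finally, the scalar prefactor on the right-hand side that comes from $\nu \circ k^{-1}$ is $\frac{1}{\sqrt{1+\norm{\grad a}^2}}$, and by \Cref{le:functional-determinant-of-chart} this equals $\frac{1}{\sqrt{\det((\diffd k^{-1})\trans \diffd k^{-1})}}$, matching the prefactor on the left-hand side. Hence both sides agree.

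For the general-chart version (so that no separate appendix argument is needed), I would replace the standard cross-product identity by the observation that for the orthonormal basis $w_1, w_2, v$ of $\R^3$ one has $w_1 \times w_2 = \pm v$ with the sign fixed by orientation, and more usefully that $v \times w_i$ and $v$ together with the $w_i$ satisfy the same bilinear relations; concretely, writing $k^{-1}(x) = p + x_1 w_1 + x_2 w_2 + a(x)v$ gives $\diffd k^{-1} = \bigl[\, w_1 + (\partial_1 a)v \ \big| \ w_2 + (\partial_2 a)v \,\bigr]$, so $\diffd k^{-1}\varphi = \varphi_1 w_1 + \varphi_2 w_2 + (\varphi\cdot\grad a)v$, and one expands $\nu \times (\varphi_2 w_1 - \varphi_1 w_2)$ using $\nu \circ k^{-1} = \frac{1}{\sqrt{1+\norm{\grad a}^2}}\bigl(v - (\partial_1 a)w_1 - (\partial_2 a)w_2\bigr)$ together with $v\times w_1 = w_2$, $v \times w_2 = -w_1$, $w_1 \times w_2 = v$ (choosing $w_1,w_2,v$ right-handed), to obtain the same result; the prefactor is handled by \Cref{le:functional-determinant-of-chart} exactly as before.

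I do not expect any genuine obstacle: this is essentially a verification. The only place requiring a little care is bookkeeping the orientation of the triple $(w_1, w_2, v)$ in the general case — if the basis is left-handed one gets an overall sign, but since the statement as written uses the specific $(\varphi_2, -\varphi_1, 0)$ and the specific normal formula, consistency of those two sign conventions is exactly what makes the identity hold, and in the special-coordinate reduction (which is the setting the paper has fixed for the body) there is nothing to worry about. I would therefore present the short special-coordinate computation as the proof and remark that the general case follows by the analogous computation or by the chart-independence results \Cref{th:surface-measure-independent-of-charts} and \Cref{th:tangential-derivative-independent-of-charts}.
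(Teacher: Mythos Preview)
Your proof is correct and is essentially the same direct computation as the paper's: both verify the identity by expanding the cross product in the special coordinates and comparing with $\diffd k^{-1}\varphi$, the only cosmetic difference being that the paper writes $\nu\times(\cdot)$ via its skew-symmetric matrix and splits into the $\varphi_{1}$- and $\varphi_{2}$-columns, whereas you compute the three components directly. Your additional remarks on the general-chart case also parallel the paper's appendix treatment (\Cref{le:tanxtr-as-linear-combination-of-tangential-vectors-general}).
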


\begin{proof}
  The following calculation proves the claim
  \begin{align*}
    \MoveEqLeft(\nu \circ k^{-1})\times
    \begin{bmatrix}
      \varphi_{2} \\ -\varphi_{1} \\ 0
    \end{bmatrix} \\[1.2ex]
    &=
    \begin{bmatrix}
      0 & -\nu_{3} & \nu_{2} \\
      \nu_{3} & 0 & -\nu_{1} \\
      -\nu_{2} & \nu_{1} & 0
    \end{bmatrix} \circ k^{-1}
                           \begin{bmatrix}
                             \varphi_{2} \\ -\varphi_{1} \\ 0
                           \end{bmatrix}
    =
    \varphi_{2}
    \begin{bmatrix}
      0 \\
      \nu_{3} \\
      -\nu_{2} \\
    \end{bmatrix} \circ k^{-1}
    -
    \varphi_{1}
    \begin{bmatrix}
      -\nu_{3} \\
      0 \\
      \nu_{1}
    \end{bmatrix} \circ k^{-1}
    \\[1.2ex]
    &=
      \frac{1}{\sqrt{1 + \norm{\grad a}^{2}}}\left(
      \varphi_{1}
      \begin{bmatrix}
        1 \\ 0 \\ \partial_{1} a
      \end{bmatrix}
    +
    \varphi_{2}
    \begin{bmatrix}
      0 \\ 1 \\ \partial_{2} a
    \end{bmatrix}
    \right)
    = \frac{1}{\sqrt{\det\big((\diffd k^{-1})\trans \diffd k^{-1}\big)}} \diffd k^{-1} \varphi.
      \qedhere
  \end{align*}
\end{proof}

\begin{lemma}\label{le:lift-through-chart}
  Let $\Gamma \subseteq \partial\Omega$ be a chart domain and $k\colon \Gamma \to U$ a strong Lipschitz chart.
  Then for every $\varphi \in \Cc(U)$ there exists a $\Phi \in \Cc(\R^{3})$ such that we have
  \begin{equation*}
    % \varphi \circ k=
    % \begin{bmatrix}
    %   -\Phi_{2} \\
    %   \Phi_{1}
    % \end{bmatrix}
    % \Bigg\vert_{\Gamma}
    \Phi \big\vert_{\Gamma} =
    \begin{bmatrix}
      \varphi_{2} \\
      -\varphi_{1} \\
      0
    \end{bmatrix}
    \circ k
    \quad\text{and}\quad
    \Phi \big\vert_{\partial\Omega \setminus \Gamma} = 0
  \end{equation*}
  on the boundary, and
  \begin{align}\label{eq:lifting-identity-tanxtr}
    \diffd k^{-1} \varphi &= \sqrt{\det \big( (\diffd k^{-1})\trans \diffd k^{-1}\big)} (\nu \times \Phi) \circ k^{-1}, \\
    \label{eq:lifting-identity-divergence}
    \div_{\R^{2}} \varphi &= -\sqrt{\det \big( (\diffd k^{-1})\trans \diffd k^{-1}\big)} (\nu \cdot \rot \Phi) \circ k^{-1}.
  \end{align}
  % \begin{align*}
  %   \div_{\R^{2}} \varphi = -\sqrt{1 + \norm{\grad a}^{2}} (\nu \cdot \rot \Phi) \circ k^{-1}
  %   =
  %   -
  %   \begin{bmatrix}
  %     - \partial_{1} a \\
  %     - \partial_{2} a \\
  %     1
  %   \end{bmatrix}
  %   \cdot (\rot \Phi) \circ k^{-1}.
  % \end{align*}
\end{lemma}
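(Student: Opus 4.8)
The plan is to lift $\varphi$ by the recipe already used for \eqref{eq:define-extension-on-strip}: pull it up the chart, rotate the two components, extend it to be constant in the normal direction, and truncate in the normal variable. Working in the special coordinates where $k^{-1}(x_{1},x_{2}) = (x_{1},x_{2},a(x_{1},x_{2}))$, I would fix a cutoff $\chi \in \Cc((-h,h))$ with $0 \le \chi \le 1$ and $\chi \equiv 1$ on $[-\tfrac{h}{2},\tfrac{h}{2}]$, extend $\varphi$ by zero outside $U$, and set
\[
  \Phi(\zeta_{1},\zeta_{2},\zeta_{3})
  = \chi(\zeta_{3})
  \begin{bmatrix}
    \varphi_{2}(\zeta_{1},\zeta_{2}) \\ -\varphi_{1}(\zeta_{1},\zeta_{2}) \\ 0
  \end{bmatrix}.
\]
Since $\varphi \in \Cc(U)^{2}$ and $\chi \in \Cc((-h,h))$, this $\Phi$ lies in $\Cc(\R^{3})^{3}$ with $\supp\Phi \subseteq \supp\varphi \times \supp\chi$, a compact subset of the open cylinder $C_{\epsilon,h}(p)$ (in special coordinates $C_{\epsilon,h}(p) = \ball_{\epsilon}(0)\times(-h,h)$).

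The verification then consists of three short checks. First, the boundary values: for $\zeta = k^{-1}(x) \in \Gamma$ we have $\abs{\zeta_{3}} = \abs{a(x)} < \tfrac{h}{2}$, hence $\chi(\zeta_{3}) = 1$ and $\Phi\big\vert_{\Gamma} = (\varphi_{2},-\varphi_{1},0)\circ k$; and for $\zeta \in \partial\Omega \setminus \Gamma$ either $\zeta$ lies outside the cylinder, so $\zeta \notin \supp\Phi$, or $\zeta$ is a graph point $(\zeta_{1},\zeta_{2},a(\zeta_{1},\zeta_{2}))$ with $(\zeta_{1},\zeta_{2}) \notin U \supseteq \supp\varphi$ (using that inside the cylinder $\partial\Omega$ is exactly the graph of $a$ over $\ball_{\epsilon}(0)$, by \Cref{def:strong-lipschitz-domain}) — in both cases $\Phi(\zeta) = 0$. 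Second, \eqref{eq:lifting-identity-tanxtr} is immediate: on $U$ one has $\Phi\circ k^{-1} = (\varphi_{2},-\varphi_{1},0)$, so $(\nu\times\Phi)\circ k^{-1} = (\nu\circ k^{-1})\times(\varphi_{2},-\varphi_{1},0)$, and \eqref{eq:lifting-identity-tanxtr} is precisely the identity of \Cref{le:tanxtr-as-linear-combination-of-tangential-vectors} multiplied through by $\sqrt{\det((\diffd k^{-1})\trans\diffd k^{-1})}$.

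The only part carrying any computation is \eqref{eq:lifting-identity-divergence}, and even that is brief. Computing $\rot\Phi$ componentwise, the two entries that contain $\chi'(\zeta_{3})$ vanish once we evaluate on $\Gamma$, because $a(x) \in (-\tfrac{h}{2},\tfrac{h}{2})$, where $\chi$ is constant; this leaves $(\rot\Phi)\circ k^{-1} = (0,0,-\chi(a)\,\div_{\R^{2}}\varphi) = (0,0,-\div_{\R^{2}}\varphi)$. Contracting with $\nu\circ k^{-1} = (1+\norm{\grad a}^{2})^{-1/2}(-\partial_{1}a,-\partial_{2}a,1)$ gives $(\nu\cdot\rot\Phi)\circ k^{-1} = -(1+\norm{\grad a}^{2})^{-1/2}\div_{\R^{2}}\varphi$, and multiplying by $-\sqrt{\det((\diffd k^{-1})\trans\diffd k^{-1})} = -\sqrt{1+\norm{\grad a}^{2}}$ (\Cref{le:functional-determinant-of-chart}) yields precisely \eqref{eq:lifting-identity-divergence}.

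I do not expect a real obstacle; the only thing needing care is the bookkeeping guaranteeing that $\supp\Phi$ meets $\partial\Omega$ only inside $\Gamma$, which is why the compact support of $\varphi$ in $U$ and the choice $\supp\chi \subseteq (-h,h)$, together with the normalization $\abs{a} < \tfrac{h}{2}$, are used. The general-coordinate version uses the same field written as in \eqref{eq:define-extension-on-strip}, namely $\Phi(\zeta) = \chi\big(v\cdot(\zeta-p)\big)\big(\varphi_{2}(W\trans(\zeta-p))\,w_{1} - \varphi_{1}(W\trans(\zeta-p))\,w_{2}\big)$, and the argument is otherwise identical; the details are collected in \Cref{sec:general-charts}.
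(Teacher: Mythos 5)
Your construction is correct and essentially the paper's own proof: the same lift $\begin{bsmallmatrix}\varphi_{2}\\-\varphi_{1}\\0\end{bsmallmatrix}$ extended constantly in the vertical direction, \eqref{eq:lifting-identity-tanxtr} obtained directly from \Cref{le:tanxtr-as-linear-combination-of-tangential-vectors}, and the same curl computation (with the $\partial_{3}$-terms dropping out on $\Gamma$) for \eqref{eq:lifting-identity-divergence}. The only cosmetic difference is the cutoff: you truncate in the vertical variable and use the cylinder structure together with $\abs{a}<\tfrac{h}{2}$ to keep $\supp\Phi$ away from $\partial\Omega\setminus\Gamma$, whereas the paper multiplies by a three-dimensional cutoff equal to $1$ on a neighborhood of $\Gamma$; both choices leave the values and the curl on $\Gamma$ untouched, so the verification is identical.
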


\begin{proof}
  We define
  \begin{align*}
    \hat{\Phi} \colon
    \mapping{U \times \R \subseteq \R^{3}}{\C^{3}}{
    \begin{bmatrix}
      \zeta_{1} \\ \zeta_{2} \\ \zeta_{3}
    \end{bmatrix}
    }{
    \begin{bmatrix}
      \varphi_{2}(\zeta_{1},\zeta_{2}) \\
      -\varphi_{1}(\zeta_{1},\zeta_{2}) \\
      0
    \end{bmatrix}.
    }
  \end{align*}
  Since $\varphi$ has compact support in $U$ we can extend $\hat{\Phi}$ outside of $U \times \R$ by $0$. Moreover we choose an $\epsilon > 0$ such that the ball with radius $2\epsilon$ around $\Gamma$ satisfies
  \[
    \ball_{2\epsilon}(\Gamma) \cap \supp \hat{\Phi} \cap (\partial\Omega \setminus \Gamma) = \emptyset.
  \]
  Finally, we choose a cut-off function $\chi \in \Cc(\R^{3})$ such that $\chi \big\vert_{\ball_{\epsilon}(\Gamma)} = 1$ and $\chi \big\vert_{\ball_{2\epsilon}(\Gamma)^{\complement}} = 0$ and we define $\Phi \coloneqq \chi \hat{\Phi}$. Hence, $\Phi \big\vert_{\partial\Omega \setminus \Gamma} = 0$.

  % alternative proof
  % For an arbitrary $p \in \Cc(U)$ we have
  % \begin{align*}
  %   -\scprod{p}{\div_{\R^{2}} \varphi}_{\Lp{2}(U)}
  %   &= \scprod{\grad_{\R^{2}} p}{\varphi}_{\Lp{2}(U)}
  %     = \scprod*{\big((\diffd k^{-1}\big)^{\dagger})\trans \grad_{\R^{2}} p}{\diffd k^{-1} \varphi}_{\Lp{2}(U)} \\
  %   &= \scprod[\Big]{\big[\big((\diffd k^{-1}\big)^{\dagger})\trans \grad_{\R^{2}} p\big] \circ k}{\Big[\tfrac{1}{\sqrt{1 + \norm{\grad a}^{2}}}\diffd k^{-1} \varphi\Big] \circ k}_{\Lp{2}(\Gamma)} \\
  %   &= \scprod*{\tangrad (p\circ k)}{\nu \times
  %     \begin{bsmallmatrix}
  %       \varphi_{2} \\
  %       -\varphi_{1} \\
  %       0
  %     \end{bsmallmatrix}
  %       \circ k
  %   }_{\Lp{2}(\Gamma)}
  %   = \scprod*{\tangrad(p \circ k)}{\nu \times \Phi \big\vert_{\Gamma}}_{\Lp{2}(\Gamma)} \\
  %   &= \scprod{p \circ k}{\nu \cdot \rot \Phi}_{\Lp{2}(\Gamma)} \\
  %   &= \scprod*{p}{(\nu \cdot (\rot \Phi)\big\vert_{\Gamma} ) \circ k^{-1}\sqrt{1 + \norm{\grad a}^{2}}}_{\Lp{2}(U)}
  % \end{align*}
  % By density of $\Cc(U)$ in $\Lp{2}(U)$ we obtain the claim.

  By construction we have
  \(
  \Phi \circ k^{-1}(x_{1},x_{2}) =
  \begin{bsmallmatrix}
    \varphi_{2}(x_{1},x_{2}) \\
    -\varphi_{1}(x_{1},x_{2}) \\
    0
  \end{bsmallmatrix}
  \).
  Thus, \Cref{le:tanxtr-as-linear-combination-of-tangential-vectors} implies~\eqref{eq:lifting-identity-tanxtr}.

  Note that locally around $\Gamma$ we have
  \(
    \Phi =
    \begin{bsmallmatrix}
      \varphi_{2} \\
      -\varphi_{1} \\
      0
    \end{bsmallmatrix}
  \)
  and $\partial_{3} \Phi$ = 0.
  Hence, we have
  \begin{align*}
    -\sqrt{1 + \norm{\grad a}^{2}} \, \nu \cdot \rot \Phi
    &=
      \begin{bmatrix}
        \partial_{1} a \\
        \partial_{2} a \\
        -1
      \end{bmatrix}
      \cdot
      \begin{bmatrix}
        0 & -\partial_{3} & \partial_{2} \\
        \partial_{3} & 0 & -\partial_{1} \\
        -\partial_{2} & \partial_{1} & 0
      \end{bmatrix}
      \begin{bmatrix}
        \varphi_{2} \\
        - \varphi_{1} \\
        0
      \end{bmatrix}
    \\
    &=
      \begin{bmatrix}
        \partial_{1} a \\
        \partial_{2} a \\
        -1
      \end{bmatrix}
      \cdot
    \begin{bmatrix}
      \partial_{3} \varphi_{1} \\
      \partial_{3} \varphi_{2} \\
      -\partial_{1} \varphi_{1} - \partial_{2} \varphi_{2}
    \end{bmatrix}
    \\
    &=
      \partial_{1} a \underbrace{\partial_{3} \varphi_{1}}_{=\mathrlap{0}}
      \mathclose{} + \partial_{2} a \underbrace{\partial_{3} \varphi_{2}}_{=\mathrlap{0}}
      \mathclose{} + \partial_{1} \varphi_{1} + \partial_{2} \varphi_{2}
      = \div_{\R^{2}} \varphi.
      \qedhere
  \end{align*}
\end{proof}

% \begin{corollary}
%   Let $k \colon \Gamma \to U$ be a strong Lipschitz chart and $\varphi \in \Cc(U)^{2}$. Then there exists a $\Phi \in \Cc(\R^{3})^{3}$ with $\Phi \big\vert_{\partial\Omega\setminus \Gamma} = 0$ such that
%   \begin{align*}
%     \diffd k^{-1} \varphi &= \sqrt{\det \big( (\diffd k^{-1})\trans \diffd k^{-1}\big)} (\nu \times \Phi) \circ k^{-1}, \\
%     \div_{\R^{2}} \varphi &= -\sqrt{\det \big( (\diffd k^{-1})\trans \diffd k^{-1}\big)} (\nu \cdot \rot \Phi) \circ k^{-1}.
%   \end{align*}
% \end{corollary}

Finally, we come to the main result, that proves that both presented approaches (\Cref{def:strong-H1-on-boundary} and \Cref{def:weak-H1-on-boundary}) to the first order Sobelev space on $\partial\Omega$ lead to the same space.

\begin{theorem}\label{th:weak-equals-strong-H1-on-boundary}
  $\tilde{\Hspace}^{1}(\partial\Omega) = \Hspace^{1}(\partial\Omega)$ and $\wtangrad f = \tangrad f$ for all $f \in \Hspace^{1}(\partial\Omega)$.
\end{theorem}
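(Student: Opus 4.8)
The plan is to combine the corollary proved above with the lifting construction of \Cref{le:lift-through-chart}. That corollary already gives $\Hspace^{1}(\partial\Omega)\subseteq\tilde{\Hspace}^{1}(\partial\Omega)$ together with $\tangrad f=\wtangrad f$ for $f$ in this subspace, so the only thing left to prove is the reverse inclusion $\tilde{\Hspace}^{1}(\partial\Omega)\subseteq\Hspace^{1}(\partial\Omega)$. I would fix $f\in\tilde{\Hspace}^{1}(\partial\Omega)$ and write $q=\wtangrad f\in\Lp{2}_{\tau}(\partial\Omega)$ for the field from \Cref{def:weak-H1-on-boundary}. By the remark after \Cref{def:strong-H1-on-boundary} it is enough to show $f\circ k^{-1}\in\Hspace^{1}(U)$ for every chart $k\colon\Gamma\to U$ in a fixed family whose domains cover $\partial\Omega$; so I would fix one such chart and reduce everything to the flat domain $U$.

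The key step is to insert the liftings supplied by \Cref{le:lift-through-chart} as test fields in the weak identity of \Cref{def:weak-H1-on-boundary}. For an arbitrary $\varphi\in\Cc(U)^{2}$, let $\Phi\in\Cc(\R^{3})^{3}$ be the lifting from \Cref{le:lift-through-chart}; by the support bookkeeping in its construction $\supp\Phi$ stays away from $\partial\Omega\setminus\Gamma$, so $\rot\Phi$ vanishes in a neighbourhood of $\partial\Omega\setminus\Gamma$ and every boundary integral that follows reduces to an integral over $\Gamma$. Writing $J=\sqrt{\det((\diffd k^{-1})\trans\diffd k^{-1})}$ and passing to $U$ by the surface-measure change of variables, the identity $\scprod{q}{\nu\times\Phi\big\vert_{\partial\Omega}}_{\Lp{2}(\partial\Omega)}=\scprod{f}{\nu\cdot(\rot\Phi)\big\vert_{\partial\Omega}}_{\Lp{2}(\partial\Omega)}$ becomes, after cancelling $J$ by means of \eqref{eq:lifting-identity-tanxtr} on the left and \eqref{eq:lifting-identity-divergence} on the right,
\[
  \scprod{q\circ k^{-1}}{\diffd k^{-1}\varphi}_{\Lp{2}(U)}
  =-\scprod{f\circ k^{-1}}{\div_{\R^{2}}\varphi}_{\Lp{2}(U)}.
\]
Since $\diffd k^{-1}$ is a real matrix, moving it onto the other factor of the left-hand inner product rewrites this as
\[
  \scprod{(\diffd k^{-1})\trans(q\circ k^{-1})}{\varphi}_{\Lp{2}(U)}
  =-\scprod{f\circ k^{-1}}{\div_{\R^{2}}\varphi}_{\Lp{2}(U)}
  \qquad\text{for all }\varphi\in\Cc(U)^{2}.
\]

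Because $a$ is Lipschitz the entries of $\diffd k^{-1}$ are bounded, hence $G\coloneqq(\diffd k^{-1})\trans(q\circ k^{-1})\in\Lp{2}(U)^{2}$, and the last display is precisely the statement that the distributional gradient of $f\circ k^{-1}$ is the $\Lp{2}$ field $G$. Thus $f\circ k^{-1}\in\Hspace^{1}(U)$ with $\grad_{\R^{2}}(f\circ k^{-1})=(\diffd k^{-1})\trans(q\circ k^{-1})$, and since the chart was arbitrary in the covering family, $f\in\Hspace^{1}(\partial\Omega)$. This proves the missing inclusion, and therefore the equality of the two spaces. For the coincidence of the gradients I would substitute this expression for $\grad_{\R^{2}}(f\circ k^{-1})$ into the chart formula of \Cref{def:strong-H1-on-boundary}: using $((\diffd k^{-1})^{\dagger})\trans(\diffd k^{-1})\trans=\diffd k^{-1}(\diffd k^{-1})^{\dagger}$, which is the orthogonal projection onto the tangential space by \Cref{le:AAdagger-ortho-projection}, together with the fact that $q\circ k^{-1}$ already lies in that space a.e., one gets $(\tangrad f)\big\vert_{\Gamma}=q\big\vert_{\Gamma}$ on each chart domain, hence $\tangrad f=q=\wtangrad f$.

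I expect the only genuinely delicate point to be the localization used when passing to $U$: one needs that the lifting $\Phi$ can be chosen with support disjoint from $\partial\Omega\setminus\Gamma$, so that $\nu\cdot(\rot\Phi)$ vanishes there and the boundary pairing really restricts to $\Gamma$, and this is exactly what the construction in the proof of \Cref{le:lift-through-chart} was set up to provide. Everything else is routine: the two changes of variables, the elementary matrix manipulations, and the standard weak characterization of $\Hspace^{1}(U)$ — because the real conceptual work, namely rewriting the $\rot$/$\div$ boundary pairing in terms of $\diffd k^{-1}\varphi$ and $\div_{\R^{2}}\varphi$, has already been carried out in \Cref{le:tanxtr-as-linear-combination-of-tangential-vectors} and \Cref{le:lift-through-chart}.
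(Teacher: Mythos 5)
Your proposal is correct and follows essentially the same route as the paper: test the weak identity of \Cref{def:weak-H1-on-boundary} with the liftings from \Cref{le:lift-through-chart}, cancel the surface-measure factor via \eqref{eq:lifting-identity-tanxtr} and \eqref{eq:lifting-identity-divergence}, and read off the distributional gradient of $f\circ k^{-1}$ on each chart. The only addition is your explicit verification that $\tangrad f=\wtangrad f$ via the projection $\diffd k^{-1}(\diffd k^{-1})^{\dagger}$, which the paper's proof of this theorem omits but which is a correct and harmless supplement.
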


\begin{proof}
  Let $f \in \tilde{\Hspace}^{1}(\partial\Omega)$, i.e., there exists a $q \in \Lp{2}_{\tau}(\partial\Omega)^{3}$ such that
  \begin{equation}\label{eq:p-in-weak-H1-on-the-boundary}
    \scprod{q}{\nu \times \Phi \big\vert_{\partial\Omega}}_{\Lp{2}(\partial\Omega)} = \scprod{f}{\nu \cdot (\rot \Phi) \big\vert_{\partial\Omega}}_{\Lp{2}(\partial\Omega)}
    \quad\text{for all}\quad \Phi \in \Cc(\R^{3})^{3}
  \end{equation}
  Let $\Gamma \subseteq \partial\Omega$ be a chart domain, $U\subseteq \R^{2}$ open and $k \colon \Gamma \to U$ a strong Lipschitz chart.
  For an arbitrary $\varphi \in \Cc(U)$ we define $\Phi$ as in \Cref{le:lift-through-chart}. Then we have
  % \begin{align*}
  %   \MoveEqLeft[0]
  %   -\scprod{f\circ k^{-1}}{\div_{\R^{2}} \varphi}_{\Lp{2}(U)} \\
  %   &\stackrel{\mathclap{\eqref{eq:lifting-identity-divergence}}}{=}
  %     \scprod*{f \circ k^{-1}}{\sqrt{\det\big((\diffd k^{-1})\trans \diffd k^{-1}\big)} (\nu \cdot \rot \Phi) \circ k^{-1}}_{\Lp{2}(U)}
  %     = \scprod*{f}{\nu \cdot \rot \Phi\big\vert_{\partial\Omega}}_{\Lp{2}(\Gamma)}
  %   \\
  %   &= \scprod*{f}{\nu \cdot \rot \Phi\big\vert_{\partial\Omega}}_{\Lp{2}(\partial\Omega)}
  %     \stackrel{\eqref{eq:p-in-weak-H1-on-the-boundary}}{=} \scprod*{q}{\nu \times \Phi\big\vert_{\partial\Omega}}_{\Lp{2}(\partial\Omega)}
  %     = \scprod*{q}{\nu \times \Phi\big\vert_{\partial\Omega}}_{\Lp{2}(\Gamma)}
  %   \\
  %   &= \scprod*{q\circ k^{-1}}{\sqrt{\det \big((\diffd k^{-1})\trans \diffd k^{-1}\big)} (\nu \times \Phi)\circ k^{-1} }_{\Lp{2}(U)}
  %     \stackrel{\mathclap{\eqref{eq:lifting-identity-tanxtr}}}{=} \scprod*{q \circ k^{-1}}{\diffd k^{-1} \varphi}_{\Lp{2}(U)}
  %     \\
  %   &= \scprod*{(\diffd k^{-1})\trans (q\circ k^{-1})}{\varphi}_{\Lp{2}(U)}
  % \end{align*}
  \begin{align*}
    \MoveEqLeft[3]
    -\scprod{f\circ k^{-1}}{\div_{\R^{2}} \varphi}_{\Lp{2}(U)} \\
    &\stackrel{\mathclap{\eqref{eq:lifting-identity-divergence}}}{=}
      \scprod*{f \circ k^{-1}}{\sqrt{\det\big((\diffd k^{-1})\trans \diffd k^{-1}\big)} (\nu \cdot \rot \Phi) \circ k^{-1}}_{\Lp{2}(U)} \\
    &= \scprod*{f}{\nu \cdot \rot \Phi\big\vert_{\partial\Omega}}_{\Lp{2}(\Gamma)}
    = \scprod*{f}{\nu \cdot \rot \Phi\big\vert_{\partial\Omega}}_{\Lp{2}(\partial\Omega)}
      \stackrel{\eqref{eq:p-in-weak-H1-on-the-boundary}}{=} \scprod*{q}{\nu \times \Phi\big\vert_{\partial\Omega}}_{\Lp{2}(\partial\Omega)} \\
    &= \scprod*{q}{\nu \times \Phi\big\vert_{\partial\Omega}}_{\Lp{2}(\Gamma)}
    = \scprod*{q\circ k^{-1}}{\sqrt{\det \big((\diffd k^{-1})\trans \diffd k^{-1}\big)} (\nu \times \Phi)\circ k^{-1} }_{\Lp{2}(U)} \\
    &\stackrel{\mathclap{\eqref{eq:lifting-identity-tanxtr}}}{=} \scprod*{q \circ k^{-1}}{\diffd k^{-1} \varphi}_{\Lp{2}(U)}
    = \scprod*{(\diffd k^{-1})\trans (q\circ k^{-1})}{\varphi}_{\Lp{2}(U)}
  \end{align*}
  Hence, $f \circ k^{-1} \in \Hspace^{1}(U)$. Since this is true for any chart $k$ we conclude $f \in \Hspace^{1}(\partial\Omega)$.

  For $f \in \Hspace^{1}(\partial\Omega)$ we conclude $\wtangrad f = \tangrad f$ from \Cref{th:weak-tangential-gradient-equals-strong-tangential-gradient}.
\end{proof}

\section{Back to the original question}\label{sec:back-to-original-question}

For this last section we assume that the reader has some basic knowledge about $\Hspace(\rot,\Omega)$ and $\Hspace(\div,\Omega)$, see, e.g., \cite[Section~3.5]{monk}.

\begin{definition}
  We say $G \in \Hspace(\rot,\Omega)$ possesses a (weak) $\Lp{2}$ tangential trace, if there exists a $q \in \Lp{2}_{\tau}(\partial\Omega)$ such that
  \begin{align*}
    \scprod{G}{\rot \Phi}_{\Lp{2}(\Omega)} - \scprod{\rot G}{\Phi}_{\Lp{2}(\Omega)}
    = \scprod{q}{\nu \times \Phi \big\vert_{\partial\Omega}}_{\Lp{2}_{\tau}(\partial\Omega)}
    \quad\text{for all}\quad \Phi \in \Cc(\R^{3})^{3}.
  \end{align*}
  We say then $q$ is the tangential trace of $G$, i.e., $\tantr G = q$.
\end{definition}

\begin{theorem}\label{th:weak-tangential-trace-of-gradient-implies-H1-on-boundary}
  Let $F \in \Hspace^{1}(\Omega)$ be such that $\grad F$ possesses a (weak) $\Lp{2}$ tangential trace. Then $F \big\vert_{\partial\Omega} \in \Hspace^{1}(\partial\Omega)$ and $\tantr \grad F = \tangrad F \big\vert_{\partial\Omega}$.
\end{theorem}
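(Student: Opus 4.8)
The plan is to prove that $F\big\vert_{\partial\Omega}$ lies in the weak space $\tilde{\Hspace}^{1}(\partial\Omega)$ of \Cref{def:weak-H1-on-boundary} and then to invoke \Cref{th:weak-equals-strong-H1-on-boundary} to upgrade this to $F\big\vert_{\partial\Omega}\in\Hspace^{1}(\partial\Omega)$. The natural candidate for the weak tangential gradient is $q\coloneqq\tantr\grad F$: since $F\in\Hspace^{1}(\Omega)$ we have $\grad F\in\Lp{2}(\Omega)^{3}$ and $\rot\grad F=0$ as a distribution (second-order distributional derivatives commute), so $\grad F\in\Hspace(\rot,\Omega)$, and by hypothesis it possesses a weak $\Lp{2}$ tangential trace $q\in\Lp{2}_{\tau}(\partial\Omega)$.

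First I would unfold the definition of $\tantr\grad F$ and use $\rot\grad F=0$ to obtain, for every $\Phi\in\Cc(\R^{3})^{3}$,
\[
  \scprod{\grad F}{\rot\Phi}_{\Lp{2}(\Omega)}=\scprod*{q}{\nu\times\Phi\big\vert_{\partial\Omega}}_{\Lp{2}_{\tau}(\partial\Omega)}.
\]
Next I would apply the $\div$-$\grad$ integration by parts formula on the strong Lipschitz domain $\Omega$ — the very identity used in the proof of \Cref{le:weak-formulation-for-smooth-functions} — to the pair $F\in\Hspace^{1}(\Omega)$ and the smooth, compactly supported field $\rot\Phi\in\Cc(\R^{3})^{3}\subseteq\Hspace(\div,\Omega)$; together with $\div\rot\Phi=0$ this gives
\[
  \scprod{\grad F}{\rot\Phi}_{\Lp{2}(\Omega)}=\scprod*{F\big\vert_{\partial\Omega}}{\nu\cdot(\rot\Phi)\big\vert_{\partial\Omega}}_{\Lp{2}(\partial\Omega)}.
\]
Comparing the two displays yields exactly the relation demanded in \Cref{def:weak-H1-on-boundary}, hence $F\big\vert_{\partial\Omega}\in\tilde{\Hspace}^{1}(\partial\Omega)$ with $\wtangrad(F\big\vert_{\partial\Omega})=q$; \Cref{th:weak-equals-strong-H1-on-boundary} then concludes, and as a byproduct one reads off $\tangrad(F\big\vert_{\partial\Omega})=\tantr\grad F$.

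Essentially all of the substance is already packed into \Cref{th:weak-equals-strong-H1-on-boundary}; the proof above is little more than the observation that \Cref{le:weak-formulation-for-smooth-functions} survives the replacement of $F\in\Cc(\R^{3})$ by an arbitrary $F\in\Hspace^{1}(\Omega)$. The only step asking for a word of justification is the $\div$-$\grad$ Green's formula with $F$ merely in $\Hspace^{1}(\Omega)$ over a Lipschitz domain, together with the interpretation of $F\big\vert_{\partial\Omega}$ as a genuine $\Lp{2}(\partial\Omega)$ function via the standard trace theorem; since the other factor $\rot\Phi$ is smooth on $\cl{\Omega}$ and compactly supported, this is classical, so I do not anticipate a real obstacle. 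I would also remark in passing that on the tangential arguments $q$ and $\nu\times\Phi\big\vert_{\partial\Omega}$ the inner products of $\Lp{2}_{\tau}(\partial\Omega)$ and $\Lp{2}(\partial\Omega)$ coincide, so switching between the two definitions is harmless.
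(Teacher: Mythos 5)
Your proposal is correct and follows exactly the paper's own argument: take $q=\tantr\grad F$, combine the $\rot$ and $\div$-$\grad$ integration by parts formulas with $\rot\grad F=0$ and $\div\rot\Phi=0$ to verify the identity of \Cref{def:weak-H1-on-boundary}, and then invoke \Cref{th:weak-equals-strong-H1-on-boundary}. No discrepancies to report.
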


\begin{proof}
  Let $q \in \Lp{2}_{\tau}(\partial\Omega)$ be such that $q = \tantr \grad F$. By the integration by parts formula for $\rot$ and $\div$-$\grad$, we have for an arbitrary $\Phi \in \Cc(\R^{3})^{3}$
  \begin{align*}
    \scprod{q}{\nu \times \Phi \big\vert_{\partial\Omega}}_{\Lp{2}_{\tau}(\partial\Omega)}
    &= \scprod{\grad F}{\rot \Phi}_{\Lp{2}(\Omega)} - \scprod{\underbrace{\rot \grad F}_{=\mathrlap{0}}}{\Phi}_{\Lp{2}(\Omega)} \\
    &= - \scprod{F}{\underbrace{\div \rot \Phi}_{=\mathrlap{0}}}_{\Lp{2}(\Omega)}
      + \scprod*{F \big\vert_{\partial\Omega}}{\nu \cdot (\rot \Phi)\big\vert_{\partial\Omega}}_{\Lp{2}(\partial\Omega)} \\
    &= \scprod*{F \big\vert_{\partial\Omega}}{\nu \cdot (\rot \Phi)\big\vert_{\partial\Omega}}_{\Lp{2}(\partial\Omega)}.
  \end{align*}
  Hence, $F \big\vert_{\partial\Omega}$ satisfies all requirements of \Cref{def:weak-H1-on-boundary}, which implies, by \Cref{th:weak-equals-strong-H1-on-boundary}, $F \big\vert_{\partial\Omega} \in \Hspace^{1}(\partial\Omega)$. In particular we have
  \begin{equation*}
    \tantr \grad F \big\vert_{\partial\Omega} = q = \wtangrad F \big\vert_{\partial\Omega} = \tangrad F \big\vert_{\partial\Omega}.
    \qedhere
  \end{equation*}
\end{proof}

\section{Conclusion}

With \Cref{th:weak-equals-strong-H1-on-boundary} we have shown that both presented approaches to $\Hspace^{1}(\partial\Omega)$ agree. Moreover, \Cref{th:weak-tangential-trace-of-gradient-implies-H1-on-boundary} answers the question about the validity of \Cref{cl:motivation}%
% , from which this journey originated,
, that started the whole discussion,
positively.
% Hence, it fills a gap in the literature.
Hence, we provide the details that are used in \cite[Proof of Thm.~2]{Cos90}, \cite[Section Le cas tridimensionnel]{BeBeCoDa97}, \cite[Proof of Lem.~3.53]{monk}, and \cite[Proof of Thm.~5.1]{BuCoSh02}.

\section*{Acknowledgement}

We thank Dirk Pauly and Martin Costabel for the discussions about \Cref{cl:motivation}.

\appendix
\section{Details for general hyperplanes}\label{sec:general-charts}

Note that in the setting with a general hyperplane $W = \spn\set{w_{1},w_{2}}$, where $w_{1}$ and $w_{2}$ are an orthonormal basis of $W$, and its normal vector $v$ we have
\begin{equation*}
  k^{-1} \colon
  \mapping{U \subseteq \R^{2}}{\Gamma}{(x_{1},x_{2})}{p + x_{1} w_{1} + x_{2} w_{2} + a(x_{1},x_{2})v.}
\end{equation*}
Hence,
\begin{equation*}
  \diffd k^{-1} =
  \begin{bmatrix}
    w_{1} + \partial_{1} a v & w_{2} + \partial_{2} a v
  \end{bmatrix}
\end{equation*}
% The implicit function that describes the image of $k^{-1}$ is given by
% \begin{align*}
%   \zeta \mapsto v \cdot \zeta - a(w_{1}\cdot \zeta, w_{2} \cdot \zeta).
% \end{align*}
% From this we can derive the outward pointing unit normal vector
and the normal vector on the tangential space is given by
\begin{equation*}
  \nu(k^{-1}(x_{1},x_{2})) = \frac{1}{\sqrt{1 + \norm{\grad a}^{2}}} (-\partial_{1} a w_{1} - \partial_{2} a w_{2} + v).
\end{equation*}
Moreover, we have
\begin{equation*}
  (\diffd k^{-1})\trans \diffd k^{-1} =
  \begin{bmatrix}
    1 + (\partial_{1}a)^{2} & \partial_{1} a\partial_{2}a \\
    \partial_{1} a\partial_{2}a & 1 + (\partial_{2}a)^{2}
  \end{bmatrix}
  =
  \begin{bmatrix}
    1 & 0 \\
    0 & 1
  \end{bmatrix}
  +
  \begin{bmatrix}
    \partial_{1} a \\ \partial_{2} a
  \end{bmatrix}
  \begin{bmatrix}
    \partial_{1} a & \partial_{2} a
  \end{bmatrix}.
\end{equation*}
and therefore \Cref{le:functional-determinant-of-chart} follows also for general strong Lipschitz charts:
\begin{equation*}
  \det\big((\diffd k^{-1})\trans \diffd k^{-1}\big) = 1 + \norm{\grad a}^{2}.
\end{equation*}

We show the modified \Cref{le:tanxtr-as-linear-combination-of-tangential-vectors,le:lift-through-chart} for general strong Lipschitz charts

\begin{lemma}\label{le:tanxtr-as-linear-combination-of-tangential-vectors-general}
  For $\varphi \in \Cc(U)^{2}$ we have
  \begin{equation*}
    \diffd k^{-1} \varphi = \sqrt{\det\big((\diffd k^{-1})\trans \diffd k^{-1}\big)} \,(\nu \circ k^{-1}) \times (\varphi_{2} w_{1} - \varphi_{1}w_{2}),
  \end{equation*}
  where the orthogonal basis $\set{w_{1},w_{2},v}$ is chosen such that $w_{1} \times w_{2} = v$ (if this is not already true we relabel $w_{1}$ and $w_{2}$).
\end{lemma}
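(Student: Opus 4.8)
The plan is to mimic the proof of \Cref{le:tanxtr-as-linear-combination-of-tangential-vectors} but now carry the orthonormal frame $\{w_{1},w_{2},v\}$ explicitly through the cross product. First I would expand the right-hand side $(\nu\circ k^{-1})\times(\varphi_{2}w_{1}-\varphi_{1}w_{2})$ using bilinearity of the cross product, obtaining $\varphi_{2}\,(\nu\circ k^{-1})\times w_{1}-\varphi_{1}\,(\nu\circ k^{-1})\times w_{2}$. Then I would substitute the formula for the normal vector,
\[
  \nu\circ k^{-1}=\frac{1}{\sqrt{1+\norm{\grad a}^{2}}}(-\partial_{1}a\,w_{1}-\partial_{2}a\,w_{2}+v),
\]
and use the relations coming from the right-handed frame: $w_{1}\times w_{2}=v$, $w_{2}\times v=w_{1}$, $v\times w_{1}=w_{2}$ (together with $w_{i}\times w_{i}=0$, $v\times v=0$). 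This reduces each cross product to a linear combination of the frame vectors.

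The computation then runs: $(\nu\circ k^{-1})\times w_{1}=\frac{1}{\sqrt{1+\norm{\grad a}^{2}}}(-\partial_{2}a\,(w_{2}\times w_{1})+v\times w_{1})=\frac{1}{\sqrt{1+\norm{\grad a}^{2}}}(\partial_{2}a\,v+w_{2})$, and similarly $(\nu\circ k^{-1})\times w_{2}=\frac{1}{\sqrt{1+\norm{\grad a}^{2}}}(-\partial_{1}a\,(w_{1}\times w_{2})-v\times w_{2})=\frac{1}{\sqrt{1+\norm{\grad a}^{2}}}(-\partial_{1}a\,v-w_{1})$ — note $v\times w_{2}=-(w_{2}\times v)=-w_{1}$. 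Assembling with the coefficients $\varphi_{2}$ and $-\varphi_{1}$ gives
\[
  \frac{1}{\sqrt{1+\norm{\grad a}^{2}}}\bigl(\varphi_{1}(w_{1}+\partial_{1}a\,v)+\varphi_{2}(w_{2}+\partial_{2}a\,v)\bigr),
\]
which is exactly $\frac{1}{\sqrt{1+\norm{\grad a}^{2}}}\,\diffd k^{-1}\varphi$ by the formula $\diffd k^{-1}=\begin{bmatrix}w_{1}+\partial_{1}a\,v & w_{2}+\partial_{2}a\,v\end{bmatrix}$. Finally I would invoke \Cref{le:functional-determinant-of-chart} (whose validity for general charts was just established) to rewrite $\sqrt{1+\norm{\grad a}^{2}}=\sqrt{\det((\diffd k^{-1})\trans\diffd k^{-1})}$, and multiply through to obtain the stated identity.

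The only point requiring care — and the single place where the "relabel $w_{1}$ and $w_{2}$ if necessary" hypothesis is used — is the sign bookkeeping in the cross-product identities: one must be consistent about whether the frame is right-handed, since swapping $w_{1}\leftrightarrow w_{2}$ flips $w_{1}\times w_{2}$ from $v$ to $-v$. Beyond that, the argument is a routine reprise of the special-chart case, so I do not anticipate any genuine obstacle; the proof can simply display the chain of equalities as above and close with \qedhere.
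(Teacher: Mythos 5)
Your proof is correct and follows essentially the same route as the paper's: a direct expansion of the cross product using the right-handed frame relations $w_{1}\times w_{2}=v$, $w_{2}\times v=w_{1}$, $v\times w_{1}=w_{2}$, the only (cosmetic) difference being that the paper clears the factor $\sqrt{1+\norm{\grad a}^{2}}$ at the outset rather than multiplying through at the end. One intermediate display contains a sign slip --- the expansion of $(\nu\circ k^{-1})\times w_{2}$ should read $+\,v\times w_{2}$, not $-\,v\times w_{2}$ --- but since $v\times w_{2}=-w_{1}$, your stated final value $-\partial_{1}a\,v-w_{1}$ is the correct one, so the argument goes through unchanged.
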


Note that $w_{1} \times w_{2} = v$ implies
\begin{align*}
  w_{1} \times v = -w_{2} \quad\text{and}\quad w_{2} \times v = w_{1}.
\end{align*}

\begin{proof}
  Note that
  \[
    \sqrt{\det \big((\diffd k^{-1})\trans \diffd k^{-1}\big)} (\nu \circ k^{-1}) = -\partial_{1}a w_{1} - \partial_{2} a w_{2} + v.
  \]
  Therefore, the following proves the claim:
  \begin{multline*}
    (-\partial_{1}a w_{1} - \partial_{2} a w_{2} + v) \times (\phi_{2} w_{1} - \phi_{1} w_{2}) \\
    = (\partial_{2}a v + w_{2})\phi_{2} + (\partial_{1}a v + w_{1})\phi_{1}
    = \diffd k^{-1} \phi.
    \tag*{\qedhere}
  \end{multline*}
\end{proof}

\begin{lemma}
  Let $\Gamma \subseteq \partial\Omega$ be a chart domain and $k\colon \Gamma \to U$ a strong Lipschitz chart.
  Then for every $\varphi \in \Cc(U)$ there exists a $\Phi \in \Cc(\R^{3})$ such that we have
  \begin{align*}
    \Phi \big\vert_{\Gamma}
    = W
    \begin{bmatrix}
      \varphi_{2} \\
      -\varphi_{1}
    \end{bmatrix}
    \circ k
    =
    (\varphi_{2}\circ k) w_{1} - (\varphi_{1} \circ k) w_{2}
    \quad\text{and}\quad \Phi \big\vert_{\partial\Omega \setminus \Gamma} = 0
  \end{align*}
  on the boundary, and
  \begin{align}\label{eq:lifting-identity-tanxtr-general}
    \diffd k^{-1} \varphi &= \sqrt{\det \big( (\diffd k^{-1})\trans \diffd k^{-1}\big)} (\nu \times \Phi) \circ k^{-1}, \\
    \label{eq:lifting-identity-divergence-general}
    \div_{\R^{2}} \varphi &= -\sqrt{\det \big( (\diffd k^{-1})\trans \diffd k^{-1}\big)} (\nu \cdot \rot \Phi) \circ k^{-1}.
  \end{align}
\end{lemma}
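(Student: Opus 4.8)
The plan is to follow the exact blueprint already laid out in the proof of \Cref{le:lift-through-chart}, only replacing the ``special chart'' bookkeeping by the general-hyperplane data $w_{1}, w_{2}, v$ and invoking \Cref{le:tanxtr-as-linear-combination-of-tangential-vectors-general} in place of \Cref{le:tanxtr-as-linear-combination-of-tangential-vectors}. First I would define, in the general coordinate system attached to $p$, $W$, $v$, the $\R^{3}$-valued map $\hat{\Phi}(\zeta) = \varphi_{2}(W\trans(\zeta-p)) w_{1} - \varphi_{1}(W\trans(\zeta-p)) w_{2}$ on the strip $\set{\zeta : W\trans(\zeta-p) \in U}$, extend it by $0$ (legitimate since $\varphi$ has compact support in $U$), and then cut it off with a fixed $\chi \in \Cc(\R^{3})$ that is $1$ on $\ball_{\epsilon}(\Gamma)$ and $0$ outside $\ball_{2\epsilon}(\Gamma)$, where $\epsilon$ is chosen so that $\ball_{2\epsilon}(\Gamma)$ meets $\partial\Omega$ only inside $\Gamma$. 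Setting $\Phi \coloneqq \chi\hat{\Phi} \in \Cc(\R^{3})$ gives $\Phi\big\vert_{\partial\Omega \setminus \Gamma} = 0$ and, since $k^{-1}(x) = p + x_{1}w_{1} + x_{2}w_{2} + a(x)v$ has $W\trans(k^{-1}(x)-p) = x$, also $\Phi \circ k^{-1} = \varphi_{2} w_{1} - \varphi_{1} w_{2}$, i.e.\ the stated boundary value $\Phi\big\vert_{\Gamma} = (\varphi_{2}\circ k)w_{1} - (\varphi_{1}\circ k)w_{2}$.

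With $\Phi$ in hand, \eqref{eq:lifting-identity-tanxtr-general} is immediate: near $\Gamma$ we have $\Phi\circ k^{-1} = \varphi_{2}w_{1} - \varphi_{1}w_{2}$, so \Cref{le:tanxtr-as-linear-combination-of-tangential-vectors-general} applied pointwise yields $\diffd k^{-1}\varphi = \sqrt{\det((\diffd k^{-1})\trans \diffd k^{-1})}\,(\nu\circ k^{-1})\times(\varphi_{2}w_{1}-\varphi_{1}w_{2}) = \sqrt{\det((\diffd k^{-1})\trans \diffd k^{-1})}\,(\nu\times\Phi)\circ k^{-1}$. For \eqref{eq:lifting-identity-divergence-general} I would compute $\nu\cdot\rot\Phi$ in a neighbourhood of $\Gamma$, where $\chi\equiv 1$ so $\Phi = \varphi_{2}w_{1} - \varphi_{1}w_{2}$ with coefficients depending only on $\zeta_{1}',\zeta_{2}'$ (the $W$-coordinates). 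Using the formula $\sqrt{1+\norm{\grad a}^{2}}\,(\nu\circ k^{-1}) = -\partial_{1}a\, w_{1} - \partial_{2}a\, w_{2} + v$ recorded in the appendix, and the orthonormality relations together with $w_{1}\times w_{2} = v$ (hence $w_{1}\times v = -w_{2}$, $w_{2}\times v = w_{1}$), the triple-product expansion of $(-\partial_{1}a\, w_{1} - \partial_{2}a\, w_{2} + v)\cdot\rot(\varphi_{2}w_{1} - \varphi_{1}w_{2})$ should collapse, exactly as in the special-chart case, to $-(\partial_{1}\varphi_{1} + \partial_{2}\varphi_{2}) = -\div_{\R^{2}}\varphi$ after the terms carrying a derivative of $\varphi$ in the $v$-direction drop out (that directional derivative vanishes because $\varphi$ only depends on the $W$-coordinates). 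Multiplying by $\sqrt{1+\norm{\grad a}^{2}} = \sqrt{\det((\diffd k^{-1})\trans\diffd k^{-1})}$ and composing with $k^{-1}$ gives the claimed identity.

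The only mild obstacle is the rotational computation: in the general frame one cannot just use coordinate partial derivatives $\partial_{1},\partial_{2},\partial_{3}$ as in the special case, so I would either (i) introduce the rotated coordinates $\zeta' = W\trans(\zeta - p)$, $\zeta_{3}' = v\trans(\zeta-p)$, observe that $\rot$ is equivariant under the orthogonal change of variables (so in the primed frame $\Phi$ again has the form $(\varphi_{2},-\varphi_{1},0)$ with no $\zeta_{3}'$-dependence), and then literally reuse the computation from the proof of \Cref{le:lift-through-chart}; or (ii) expand $\rot(\varphi_{2}w_{1}-\varphi_{1}w_{2})$ directly via $\rot(g\,w) = \grad g \times w$ for constant $w$, noting $\grad\varphi_{j}$ lies in $\spn\set{w_{1},w_{2}}$, and simplify using the cross-product table above. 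Option (i) is cleaner and makes transparent that this lemma is genuinely the same statement as \Cref{le:lift-through-chart} read in an orthonormal frame; I would present it that way, reducing the whole proof to a change of orthonormal coordinates plus the already-proved special case and \Cref{le:tanxtr-as-linear-combination-of-tangential-vectors-general}.
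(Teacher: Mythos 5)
Your construction of $\Phi$ and your derivation of \eqref{eq:lifting-identity-tanxtr-general} coincide with the paper's proof. For \eqref{eq:lifting-identity-divergence-general}, however, you take a genuinely different route: the paper never computes $\rot\Phi$ in the general frame, but instead tests against an arbitrary $f\in\Cc(U)$, chaining
$-\scprod{f}{\div_{\R^{2}}\varphi}_{\Lp{2}(U)}=\scprod{\grad_{\R^{2}}f}{\varphi}_{\Lp{2}(U)}=\scprod{\tangrad(f\circ k)}{\nu\times\Phi}_{\Lp{2}(\Gamma)}=\scprod{f\circ k}{\nu\cdot\rot\Phi}_{\Lp{2}(\Gamma)}$
via \Cref{le:weak-formulation-for-smooth-functions} and concluding by density of $\Cc(U)$ in $\Lp{2}(U)$. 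Your direct computation is correct: in option (ii), $\rot(g\,w)=\grad g\times w$ with $\grad\big(\varphi_{j}(W\trans(\cdot-p))\big)=\partial_{1}\varphi_{j}\,w_{1}+\partial_{2}\varphi_{j}\,w_{2}$ gives $\rot\Phi=-(\partial_{1}\varphi_{1}+\partial_{2}\varphi_{2})\,v$ near $\Gamma$, and $\nu\cdot v=(1+\norm{\grad a}^{2})^{-1/2}$ finishes the job; option (i) works because $\det[w_{1}\;w_{2}\;v]=1$ under the convention $w_{1}\times w_{2}=v$, so $\rot$ is equivariant under this (proper) rotation — do keep that orientation caveat explicit, since equivariance fails for improper orthogonal maps. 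Your route is arguably more self-contained: it needs no density step, and it avoids the slightly delicate point that \Cref{le:weak-formulation-for-smooth-functions} is stated for restrictions of $\Cc(\R^{3})$ functions, so the paper's argument implicitly requires first lifting $f\circ k$ to such a function. The paper's duality argument, in return, dodges any curl computation in non-aligned coordinates.

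One small correction to your setup: you cannot choose $\epsilon$ so that $\ball_{2\epsilon}(\Gamma)$ meets $\partial\Omega$ only inside $\Gamma$ — every neighbourhood of $\Gamma$ meets $\partial\Omega\setminus\Gamma$ near the relative boundary of $\Gamma$. The condition you actually need (and the one used in \Cref{le:lift-through-chart}) is the weaker $\ball_{2\epsilon}(\Gamma)\cap\supp\hat{\Phi}\cap(\partial\Omega\setminus\Gamma)=\emptyset$, which is achievable because $\supp\varphi$ is compact in $U$ and the boundary points outside $\Gamma$ lying in the strip over $\supp\varphi$ are at distance at least $h/2$ from $\Gamma$ in the $v$-direction. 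With that adjustment the rest of your argument goes through unchanged.
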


\begin{proof}
  We define
  $\hat{\Phi} \in \conC^{\infty}(\R^{3})^{3}$ by
  \begin{align*}
    \hat{\Phi}(\zeta)
    = W
    \begin{bmatrix}
      \varphi_{2} \\ -\varphi_{1}
    \end{bmatrix}
    \big(W\trans(\zeta - p)\big)
    = \varphi_{2}(W\trans (\zeta - p)) w_{1} - \varphi_{1}(W\trans (\zeta - p)) w_{2},
  \end{align*}
  where $W \in \R^{3\times 2}$ is the matrix containing the vectors $w_{1}$ and $w_{2}$ as rows, i.e.,
  \(
  W =
  \begin{bmatrix}
    w_{1} & w_{2}
  \end{bmatrix}
  \).
  Finally, we define $\Phi \in \Cc(\R^{3})^{3}$ by $\chi \hat{\Phi}$ where $\chi \in \Cc(\R^{3})$ is such that in a small neighborhood of $\Gamma$ $\chi = 1$ and $\Phi \big\vert_{\partial\Omega\setminus \Gamma} = 0$.
  Basically, by construction we have
  \(
  \Phi \big\vert_{\Gamma} = W
  \begin{bsmallmatrix}
    \varphi_{2} \\ - \varphi_{1}
  \end{bsmallmatrix}
  \circ k
  \).
  Hence, we have $\Phi \circ k^{-1} = \varphi_{2} w_{1} - \varphi_{1} w_{2}$ and \Cref{le:tanxtr-as-linear-combination-of-tangential-vectors-general} gives \eqref{eq:lifting-identity-tanxtr-general}

  For an arbitrary $f \in \Cc(U)$ we have
  \begin{align*}
    -\scprod{f}{\div_{\R^{2}} \varphi}_{\Lp{2}(U)}
    &= \scprod{\grad_{\R^{2}} f}{\varphi}_{\Lp{2}(U)}
      = \scprod*{\big((\diffd k^{-1})^{\dagger}\big)\trans \grad_{\R^{2}} f}{\diffd k^{-1} \varphi}_{\Lp{2}(U)} \\
    &= \scprod[\Big]{\big[{(\diffd k^{-1})^{\dagger}}\trans \grad_{\R^{2}} f\big] \circ k}{\Big[\tfrac{1}{\sqrt{\det((\diffd k^{-1})\trans \diffd k^{-1})}}\diffd k^{-1} \varphi\Big] \circ k}_{\Lp{2}(\Gamma)} \\
    &=
      % \scprod*{\tangrad (f\circ k)}{\nu \times
    %   \begin{bsmallmatrix}
    %     \varphi_{2} \\
    %     -\varphi_{1} \\
    %     0
    %   \end{bsmallmatrix}
    %     \circ k
    % }_{\Lp{2}(\Gamma)} =
    \scprod*{\tangrad(f \circ k)}{\nu \times \Phi \big\vert_{\Gamma}}_{\Lp{2}(\Gamma)}
    = \scprod*{f \circ k}{\nu \cdot (\rot \Phi)\big\vert_{\Gamma}}_{\Lp{2}(\Gamma)} \\
    &= \scprod*{f}{(\nu \cdot \rot \Phi ) \circ k^{-1}\sqrt{\det\big((\diffd k^{-1})\trans \diffd k^{-1}\big)}}_{\Lp{2}(U)}
  \end{align*}
  By density of $\Cc(U)$ in $\Lp{2}(U)$ we obtain \eqref{eq:lifting-identity-divergence-general}.
\end{proof}

%The extension in~\eqref{eq:define-extension-on-strip} in the general case is defined by $\Phi_{i,n}(\zeta) = \varphi_{i,n}(W\trans(\zeta-\zeta_{i}))$.

\section{Independence of the charts}

Note that for two strong Lipschitz charts $k_{1}\colon \Gamma_{1} \to U_{1}$, $k_{2}\colon \Gamma_{2} \to U_{2}$ with overlapping chart domains (i.e., $\Gamma_{1} \cap \Gamma_{2} \neq \emptyset$) we have that the columns of $\diffd k_{1}^{-1}(k_{1}(\zeta))$ and the columns of $\diffd k_{2}^{-1}(k_{2}(\zeta))$ span the same linear subspace of $\R^{d}$ for a.e.\ $\zeta \in \Gamma_{1} \cap \Gamma_{2}$, namely the \emph{tangential space} of $\partial\Omega$ at $\zeta$. The next lemma will specify this.

\begin{lemma}
  Let $k_{1} \colon \Gamma_{1} \to U_{1}$ and $k_{2}\colon \Gamma_{2} \to U_{2}$ be strong Lipschitz charts.
  % Then on $\Gamma_{1} \cap \Gamma_{2}$ the columns of $\diffd k_{1}^{-1}$ and $\diffd k_{2}^{-1}$ span the same linear subspace of $\R^{d}$ a.e., i.e.,
  Then
  \begin{align*}
    \ran \big[\diffd k_{1}^{-1}\big(k_{1}(\zeta)\big)\big]
    %= \diffd k_{1}^{-1}(k_{1}(\zeta))\R^{d-1} = \diffd k_{2}^{-1}(k_{2}(\zeta))\R^{d-1}
    = \ran \big[\diffd k_{2}^{-1}\big(k_{2}(\zeta)\big)\big]
    \quad\text{for a.e.}\quad \zeta \in \Gamma_{1} \cap \Gamma_{2}.
  \end{align*}
  Moreover,
  \begin{align}\label{eq:change-of-charts-2}
    (\diffd k_{1}^{-1})^{\dagger} \circ (k_{1} \circ k_{2}^{-1})\, \diffd k_{2}^{-1} = \diffd(k_{1} \circ k_{2}^{-1}).
  \end{align}
\end{lemma}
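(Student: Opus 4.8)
The plan is to derive both assertions from the a.e.\ chain rule applied to the transition map $g \coloneqq k_{1} \circ k_{2}^{-1}$, which is a bi-Lipschitz homeomorphism of the open set $V \coloneqq k_{2}(\Gamma_{1} \cap \Gamma_{2}) \subseteq U_{2}$ onto $g(V) = k_{1}(\Gamma_{1} \cap \Gamma_{2}) \subseteq U_{1}$; indeed it is a composition of restrictions of the charts $k_{1}, k_{2}$ and their inverses, all of which are Lipschitz, and likewise for $g^{-1} = k_{2} \circ k_{1}^{-1}$. The elementary identity to exploit is $k_{2}^{-1} = k_{1}^{-1} \circ g$ on $V$.

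First I would settle the measure-theoretic bookkeeping. By Rademacher's theorem $k_{1}^{-1}$, $g$ and $g^{-1}$ are differentiable outside Lebesgue-null sets, and since $g$ and $g^{-1}$ are Lipschitz they enlarge Lebesgue outer measure at most by a factor $\mathrm{Lip}^{2}$, so the $g$-preimage of the bad set of $k_{1}^{-1}$ and the bad set of $g^{-1}$ remain null in $V$; let $Z \subseteq V$ be the resulting null union of exceptional sets. For $y \in V \setminus Z$ the map $g$ is differentiable at $y$ and $k_{1}^{-1}$ is differentiable at $g(y)$, so by the elementary chain rule --- differentiability of a composition when the inner map is differentiable at the point and the outer map at its image --- $k_{2}^{-1}$ is differentiable at $y$ with
\[
  \diffd k_{2}^{-1}(y) = \diffd k_{1}^{-1}\big(g(y)\big)\, \diffd g(y);
\]
applying the same principle to $\id_{V} = g^{-1} \circ g$ gives $\diffd g^{-1}(g(y))\, \diffd g(y) = \idop$, so $\diffd g(y) \in \R^{2 \times 2}$ is invertible. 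Finally, by the local surface-measure formula the bounded Jacobian factor $\sqrt{\det((\diffd k_{2}^{-1})\trans \diffd k_{2}^{-1})} = \sqrt{1 + \norm{\grad a_{2}}^{2}}$ makes $\mu$-null sets on $\Gamma_{2}$ correspond to Lebesgue-null sets in $U_{2}$, so $k_{2}^{-1}(Z)$ is $\mu$-null and everything established on $V \setminus Z$ holds for a.e.\ $\zeta \in \Gamma_{1} \cap \Gamma_{2}$.

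Since $\diffd g(y)$ is invertible, the displayed identity shows that $\diffd k_{2}^{-1}(y)$ and $\diffd k_{1}^{-1}(g(y))$ have the same range; writing $y = k_{2}(\zeta)$, so $g(y) = k_{1}(\zeta)$, this is precisely $\ran \diffd k_{1}^{-1}(k_{1}(\zeta)) = \ran \diffd k_{2}^{-1}(k_{2}(\zeta))$, the common value being the tangential space of $\partial\Omega$ at $\zeta$. For \eqref{eq:change-of-charts-2} I would multiply the displayed identity on the left by $\big(\diffd k_{1}^{-1}(g(y))\big)^{\dagger}$: by \Cref{le:functional-determinant-of-chart} we have $\det((\diffd k_{1}^{-1})\trans \diffd k_{1}^{-1}) = 1 + \norm{\grad a_{1}}^{2} > 0$, hence $\diffd k_{1}^{-1}$ is injective and $(\diffd k_{1}^{-1})^{\dagger}\diffd k_{1}^{-1} = \big((\diffd k_{1}^{-1})\trans \diffd k_{1}^{-1}\big)^{-1}(\diffd k_{1}^{-1})\trans \diffd k_{1}^{-1} = \idop$; what remains is exactly $\big(\diffd k_{1}^{-1}\big)^{\dagger} \circ (k_{1} \circ k_{2}^{-1})\, \diffd k_{2}^{-1} = \diffd g = \diffd(k_{1} \circ k_{2}^{-1})$.

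The one step that needs genuine care is the a.e.\ validity of the chain rule for compositions of merely Lipschitz maps --- pointwise it can fail even where all maps in sight are differentiable --- and the remedy is exactly the observation above that the bi-Lipschitz $g$ (equivalently $g^{-1}$) does not inflate null sets, so that for a.e.\ $y$ the outer map $k_{1}^{-1}$ is differentiable at $g(y)$. Everything else --- Rademacher's theorem, the product rule for differentials, the Moore--Penrose identity for injective matrices, and the correspondence between $\mu$-null and Lebesgue-null sets through a chart --- is routine.
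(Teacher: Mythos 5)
Your proof is correct and follows essentially the same route as the paper: the chain rule applied to $k_{2}^{-1} = k_{1}^{-1} \circ (k_{1} \circ k_{2}^{-1})$, the a.e.\ invertibility of $\diffd(k_{1}\circ k_{2}^{-1})$, and left-multiplication by $(\diffd k_{1}^{-1})^{\dagger}$ using $(\diffd k_{1}^{-1})^{\dagger}\diffd k_{1}^{-1} = \idop$. You additionally spell out the measure-theoretic justification of the a.e.\ chain rule for Lipschitz maps (Rademacher plus the fact that the bi-Lipschitz transition map does not inflate null sets), which the paper leaves implicit.
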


\begin{proof}
  The first assertion follows from
  \begin{align}\label{eq:change-of-charts}
    \diffd k_{2}^{-1} = \diffd (k_{1}^{-1} \circ k_{1} \circ k_{2}^{-1}) = (\diffd k_{1}^{-1})\circ (k_{1} \circ k_{2}^{-1})\, \diffd(k_{1} \circ k_{2}^{-1})
  \end{align}
  and the fact that $\diffd(k_{1} \circ k_{2}^{-1})(\zeta)$ is a regular matrix for a.e.\ $\zeta \in \Gamma_{1} \cap \Gamma_{2}$.
  Multiplying both side of~\eqref{eq:change-of-charts} with $(\diffd k_{1}^{-1})^{\dagger} \circ (k_{1} \circ k_{2}^{-1})$ implies~\eqref{eq:change-of-charts-2}.
\end{proof}

% Note that for two strong Lipschitz charts $k_{1}\colon \Gamma_{1} \to U_{1}$, $k_{2}\colon \Gamma_{2} \to U_{2}$ with overlapping chart domains (i.e., $\Gamma_{1} \cap \Gamma_{2} \neq \emptyset$) we have that columns of $\diffd k_{1}^{-1}(k_{1}(\zeta))$ and the columns of $\diffd k_{2}^{-1}(k_{2}(\zeta))$ span the same linear sub space of $\R^{d}$ for a.e.\ $\zeta \in \Gamma_{1} \cap \Gamma_{2}$, namely the \emph{tangential space} of $\partial\Omega$ at $\zeta$.
% This can be seen by
% \begin{align}\label{eq:change-of-charts}
%   \diffd k_{2}^{-1} = \diffd (k_{1}^{-1} \circ k_{1} \circ k_{2}^{-1}) = (\diffd k_{1}^{-1})\circ (k_{1} \circ k_{2}^{-1})\, \diffd(k_{1} \circ k_{2}^{-1})
% \end{align}
% and the fact that $\diffd(k_{1} \circ k_{2}^{-1})(\zeta)$ is a regular matrix for a.e.\ $\zeta \in \Gamma_{1} \cap \Gamma_{2}$.

% We can multiply on both sides of \eqref{eq:change-of-charts} with $(\diffd k_{1}^{-1})^{\dagger} \circ (k_{1} \circ k_{2}^{-1})$ to obtain
% \begin{align}\label{eq:change-of-charts-2}
%   (\diffd k_{1}^{-1})^{\dagger} \circ (k_{1} \circ k_{2}^{-1}) \diffd k_{2}^{-1} = \diffd(k_{1} \circ k_{2}^{-1})
% \end{align}

\begin{lemma}\label{le:dk-dk-dagger-concides-for-different-charts}
  Let $k_{1}\colon \Gamma_{1} \to U_{1}$, $k_{2}\colon \Gamma_{2} \to U_{2}$ strong Lipschitz charts. Then for a.e.\ $\zeta \in \Gamma_{1} \cap \Gamma_{2}$ the following holds
  \begin{align*}
    (\diffd k_{1}^{-1})(\diffd k_{1}^{-1})^{\dagger} \circ k_{1}(\zeta)
    = (\diffd k_{2}^{-1})(\diffd k_{2}^{-1})^{\dagger} \circ k_{2}(\zeta).
  \end{align*}
\end{lemma}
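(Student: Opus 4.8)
The plan is to recognize that $(\diffd k_i^{-1})(\diffd k_i^{-1})^\dagger$ is nothing but the orthogonal projection onto $\ran \diffd k_i^{-1}$, and then invoke the fact that this range is chart-independent.

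Concretely, first I would recall that by \Cref{le:AAdagger-ortho-projection} (the same fact already used in the proof of \Cref{le:characterization-of-tangrad}), for an injective matrix $A$ the matrix $A A^{\dagger} = A(A\trans A)^{-1}A\trans$ is the orthogonal projection of $\R^{3}$ onto $\ran A$. Since a strong Lipschitz chart has $\diffd k_i^{-1}(k_i(\zeta)) \in \R^{3\times 2}$ injective (its two columns are linearly independent) for a.e.\ $\zeta$, the matrix $(\diffd k_i^{-1})(\diffd k_i^{-1})^{\dagger} \circ k_i(\zeta)$ is the orthogonal projection onto $\ran \diffd k_i^{-1}(k_i(\zeta))$ for $i = 1,2$.

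Next I would apply the preceding lemma, which states $\ran \diffd k_1^{-1}(k_1(\zeta)) = \ran \diffd k_2^{-1}(k_2(\zeta))$ for a.e.\ $\zeta \in \Gamma_1 \cap \Gamma_2$. An orthogonal projection onto a given subspace is unique, so the two orthogonal projections must agree, which is exactly the claimed identity. (Alternatively, one can derive this by direct computation: starting from the change-of-charts identity \eqref{eq:change-of-charts-2} and \eqref{eq:change-of-charts}, one gets $\diffd k_2^{-1} (\diffd k_2^{-1})^{\dagger} = \diffd k_1^{-1}\, \diffd(k_1\circ k_2^{-1})\, \big(\diffd k_1^{-1}\,\diffd(k_1\circ k_2^{-1})\big)^{\dagger}$ and then uses $(AB)^{\dagger} = B^{-1}A^{\dagger}$ for invertible $B$ to collapse this to $\diffd k_1^{-1}(\diffd k_1^{-1})^{\dagger}$; but the uniqueness argument is shorter and cleaner.)

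There is essentially no obstacle here: the only thing to be a little careful about is the "a.e." quantifier, namely that outside a null set in $\Gamma_1 \cap \Gamma_2$ the differentials $\diffd k_i^{-1}$ exist (Rademacher), are injective, and the range-equality of the previous lemma holds; on the intersection of these three full-measure sets everything goes through pointwise.
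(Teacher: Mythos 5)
Your proposal is correct and follows exactly the paper's own argument: identify $(\diffd k_{i}^{-1})(\diffd k_{i}^{-1})^{\dagger}$ as the orthogonal projection onto $\ran \diffd k_{i}^{-1}$ via \Cref{le:AAdagger-ortho-projection}, invoke the range equality from the preceding lemma, and conclude by uniqueness of the orthogonal projection onto a given subspace. The extra care you take with the a.e.\ quantifier is a welcome refinement but does not change the substance.
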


\begin{proof}
  Note that for a.e.\ $\zeta \in \Gamma_{1} \cap \Gamma_{2}$ we have $\ran [\diffd k_{1}^{-1}(k_{1}(\zeta))] = \ran [\diffd k_{2}^{-1}(k_{2}(\zeta))]$. By \Cref{le:AAdagger-ortho-projection} $(\diffd k_{1}^{-1})(\diffd k_{1}^{-1})^{\dagger} \circ k_{1}(\zeta)$ is the orthogonal projection on $\ran [\diffd k_{1}^{-1}(k_{1}(\zeta))]$ and $(\diffd k_{2}^{-1})(\diffd k_{2}^{-1})^{\dagger} \circ k_{2}(\zeta)$ is the orthogonal projection on $\ran [\diffd k_{2}^{-1}(k_{2}(\zeta))]$. Since these ranges coincide we conclude the assertion.
\end{proof}

Sometimes it is more convenient to work with the boundary derivative $\diffd_{\tau}$ instead of the the tangential gradient $\tangrad$. This derivative is given by $\diffd_{\tau} f = (\tangrad f)\trans$ or locally by
\(
  (\diffd_{\tau} f)\big\vert_{\Gamma} = \big[\diffd(f \circ k^{-1}) (\diffd k^{-1})^{\dagger} \big] \circ k.
\)

\begin{proposition}\label{th:tangential-derivative-independent-of-charts}
  Let $f \in \Hspace^{1}(\Omega)$. Then $\tangrad f$ and $\diffd_{\tau} f$ are independent of the charts.
\end{proposition}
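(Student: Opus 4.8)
The plan is to reduce the assertion to a pointwise (a.e.)\ matrix identity on overlapping chart domains and then verify it with the chain rule together with~\eqref{eq:change-of-charts-2} and \Cref{le:dk-dk-dagger-concides-for-different-charts}. Concretely, fix two strong Lipschitz charts $k_{1}\colon \Gamma_{1}\to U_{1}$ and $k_{2}\colon \Gamma_{2}\to U_{2}$ with $\Gamma_{1}\cap\Gamma_{2}\neq\emptyset$. By \Cref{def:strong-H1-on-boundary} both $f\circ k_{1}^{-1}$ and $f\circ k_{2}^{-1}$ lie in $\Hspace^{1}$ of the respective flat domains, so $\diffd(f\circ k_{i}^{-1})$ is defined a.e.; moreover the transition map $\Psi\coloneqq k_{1}\circ k_{2}^{-1}$ is bi-Lipschitz, hence it and its inverse send Lebesgue-null sets to null sets. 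It therefore suffices to show, for a.e.\ $x\in k_{2}(\Gamma_{1}\cap\Gamma_{2})$,
\[
  \diffd(f\circ k_{2}^{-1})(x)\,(\diffd k_{2}^{-1})^{\dagger}(x)
  = \big[\diffd(f\circ k_{1}^{-1})\,(\diffd k_{1}^{-1})^{\dagger}\big](\Psi(x)),
\]
since composing with $k_{2}$ and using $\Psi\circ k_{2}=k_{1}$ then gives $(\diffd_{\tau}f)|_{\Gamma_{1}}=(\diffd_{\tau}f)|_{\Gamma_{2}}$ a.e.\ on $\Gamma_{1}\cap\Gamma_{2}$, and transposing yields the same for $\tangrad f$. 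Independence of the charts follows because the locally defined objects then glue to one global object.

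For the displayed identity, the chain rule for $\Hspace^{1}$-functions under the bi-Lipschitz change of variables $\Psi$ gives $\diffd(f\circ k_{2}^{-1}) = \big[\diffd(f\circ k_{1}^{-1})\circ\Psi\big]\,\diffd\Psi$ a.e. Substituting this into the left-hand side, it remains to prove
\[
  \diffd\Psi\,(\diffd k_{2}^{-1})^{\dagger} = (\diffd k_{1}^{-1})^{\dagger}\circ\Psi
\]
a.e.\ on $k_{2}(\Gamma_{1}\cap\Gamma_{2})$. Using~\eqref{eq:change-of-charts-2} to write $\diffd\Psi = \big((\diffd k_{1}^{-1})^{\dagger}\circ\Psi\big)\,\diffd k_{2}^{-1}$, the left-hand side becomes $\big((\diffd k_{1}^{-1})^{\dagger}\circ\Psi\big)\,\big(\diffd k_{2}^{-1}(\diffd k_{2}^{-1})^{\dagger}\big)$. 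By \Cref{le:AAdagger-ortho-projection} the factor $\diffd k_{2}^{-1}(\diffd k_{2}^{-1})^{\dagger}$ is the orthogonal projection onto $\ran\diffd k_{2}^{-1}$, which equals $\ran(\diffd k_{1}^{-1}\circ\Psi)$ by the range-equality statement preceding \Cref{le:dk-dk-dagger-concides-for-different-charts}; since a Moore--Penrose inverse $A^{\dagger}$ annihilates $(\ran A)^{\perp}$, i.e.\ $A^{\dagger}$ equals $A^{\dagger}$ composed with the orthogonal projection onto $\ran A$, multiplying $(\diffd k_{1}^{-1})^{\dagger}\circ\Psi$ on the right by this projection changes nothing, and the identity follows. (Equivalently one may invoke \Cref{le:dk-dk-dagger-concides-for-different-charts} directly together with $A^{\dagger}AA^{\dagger}=A^{\dagger}$.)

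The only real obstacle is this pseudoinverse identity: Moore--Penrose inverses do not respect matrix products in general, so one genuinely needs the geometric input that $\diffd k_{1}^{-1}$ and $\diffd k_{2}^{-1}$ have the same range (the tangential space at the point), which is exactly where the strong Lipschitz structure enters, plus the elementary fact that $A^{\dagger}$ vanishes on $(\ran A)^{\perp}$. Everything else --- the chain rule, the substitution $x=k_{2}(\zeta)$, and the passage between $\diffd_{\tau}f$ and $\tangrad f$ by transposition --- is routine; the only care required is the a.e.\ bookkeeping, for which one uses that bi-Lipschitz maps preserve null sets and that the chain rule for $\Hspace^{1}$-functions is valid under bi-Lipschitz changes of coordinates.
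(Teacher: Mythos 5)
Your proof is correct and follows essentially the same route as the paper: the chain rule applied to the transition map $k_{1}\circ k_{2}^{-1}$, the identity~\eqref{eq:change-of-charts-2}, and the absorption of the orthogonal projection $\diffd k_{2}^{-1}(\diffd k_{2}^{-1})^{\dagger}$ into $(\diffd k_{1}^{-1})^{\dagger}$ via \Cref{le:dk-dk-dagger-concides-for-different-charts} and $A^{\dagger}AA^{\dagger}=A^{\dagger}$. The differences are only presentational --- you isolate the pseudoinverse identity as a separate display and make the a.e.\ bookkeeping and the validity of the Sobolev chain rule under bi-Lipschitz changes of variables explicit, which the paper leaves implicit.
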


\begin{proof}
  Let $k_{1}$ and $k_{2}$ be two charts with overlapping chart domains. Then we have
  \begin{align*}
    (\diffd_{\tau} f)\big\vert_{\Gamma_{1} \cap \Gamma_{2}}
    &= \big[ \diffd (f \circ k_{2}^{-1}) (\diffd k_{2}^{-1})^{\dagger} \big]\circ k_{2}
      = \big[ \diffd (f \circ k_{1}^{-1} \circ k_{1} \circ k_{2}^{-1}) (\diffd k_{2}^{-1})^{\dagger} \big]\circ k_{2}
      \\
    &= \big[ \diffd (f \circ k_{1}^{-1}) \circ (k_{1} \circ k_{2}^{-1})\, \underbrace{\diffd(k_{1} \circ k_{2}^{-1})}_{\stackrel{\eqref{eq:change-of-charts-2}}{=}\mathrlap{(\diffd k_{1}^{-1})^{\dagger} \circ (k_{1} \circ k_{2}^{-1}) \diffd k_{2}^{-1}}} (\diffd k_{2}^{-1})^{\dagger} \big]\circ k_{2} \\
      %\intertext{by \eqref{eq:change-of-charts-2} we can replace $\diffd(k_{1} \circ k_{2}^{-1})$ by $(\diffd k_{1}^{-1})^{\dagger} \circ (k_{1} \circ k_{2}^{-1}) \diffd k_{2}^{-1}$.}
    &= \big[ \diffd (f \circ k_{1}^{-1}) \circ (k_{1} \circ k_{2}^{-1}) (\diffd k_{1}^{-1})^{\dagger} \circ (k_{1} \circ k_{2}^{-1})
      \underbrace{\diffd k_{2}^{-1} (\diffd k_{2}^{-1})^{\dagger}}_{\mathllap{[\diffd k_{1}^{-1} (\diffd k_{1}^{-1})^{\dagger}] \circ (k_{1} \circ k_{2}^{-1})}\stackrel{\text{L.\ref{le:dk-dk-dagger-concides-for-different-charts}}}{=}} \big]\circ k_{2}
    \\
    \intertext{Note that $A^{\dagger} A A^{\dagger} = A^{\dagger}$.}
    &= \big[ \diffd(f \circ k_{1}) \circ (k_{1} \circ k_{2}^{-1}) (\diffd k_{1}^{-1})^{\dagger} \circ (k_{1} \circ k_{2}^{-1})\big] \circ k_{2} \\
    &=\big[ \diffd(f \circ k_{1})  (\diffd k_{1}^{-1})^{\dagger} \big] \circ k_{1}.
      \qedhere
  \end{align*}
\end{proof}

\begin{proposition}\label{th:surface-measure-independent-of-charts}
  The surface measure on $\partial\Omega$ is independent of the partition and the charts.
\end{proposition}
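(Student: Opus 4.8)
The plan is to reduce the claim to the change–of–variables (substitution) formula for bi-Lipschitz maps, applied to the transition maps between overlapping charts, and then to dispatch the dependence on the partition by a common-refinement argument. \emph{Step 1 (a determinant identity on overlaps).} Let $k_1\colon\Gamma_1\to U_1$ and $k_2\colon\Gamma_2\to U_2$ be strong Lipschitz charts with $\Gamma_1\cap\Gamma_2\neq\emptyset$. The transition map $\psi\coloneqq k_1\circ k_2^{-1}$ maps $k_2(\Gamma_1\cap\Gamma_2)$ onto $k_1(\Gamma_1\cap\Gamma_2)$ and is bi-Lipschitz; hence by Rademacher's theorem it is differentiable $\uplambda_2$-a.e., with $\uplambda_2$-a.e.\ invertible Jacobian, and, since bi-Lipschitz maps carry $\uplambda_2$-null sets to $\uplambda_2$-null sets, the chain rule~\eqref{eq:change-of-charts}, i.e.\ $\diffd k_2^{-1}=\big[(\diffd k_1^{-1})\circ\psi\big]\,\diffd\psi$, holds $\uplambda_2$-a.e.\ on $k_2(\Gamma_1\cap\Gamma_2)$. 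Forming $(\,\cdot\,)\trans(\,\cdot\,)$ and taking determinants yields, $\uplambda_2$-a.e.,
\[
  \sqrt{\det\big((\diffd k_2^{-1})\trans\diffd k_2^{-1}\big)}
  = \big|\det\diffd\psi\big|\cdot\Big(\sqrt{\det\big((\diffd k_1^{-1})\trans\diffd k_1^{-1}\big)}\circ\psi\Big),
\]
where by \Cref{le:functional-determinant-of-chart} both square roots equal $\sqrt{1+\norm{\grad a}^2}\ge 1$, so no sign or integrability issue can occur.

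\emph{Step 2 (chart independence of the local measure).} Fix a measurable $\Upsilon\subseteq\Gamma_1\cap\Gamma_2$ and a bounded, real-valued measurable weight $g$ on $\Gamma_1\cap\Gamma_2$. Since $\Upsilon\subseteq\Gamma_1\cap\Gamma_2$ and the $k_i$ are injective, $\indicator_{k_1(\Upsilon)}\circ\psi=\indicator_{k_2(\Upsilon)}$ and $(g\circ k_1^{-1})\circ\psi=g\circ k_2^{-1}$. Applying the substitution formula for the bi-Lipschitz change of variables $x=\psi(y)$ to the integrand $\indicator_{k_1(\Upsilon)}\,(g\circ k_1^{-1})\,\sqrt{\det((\diffd k_1^{-1})\trans\diffd k_1^{-1})}$ and inserting the identity of Step 1, we obtain
\begin{multline*}
  \int_{k_1(\Upsilon)} (g\circ k_1^{-1})\,\sqrt{\det\big((\diffd k_1^{-1})\trans\diffd k_1^{-1}\big)}\dx[\uplambda_2] \\
  = \int_{k_2(\Upsilon)} (g\circ k_2^{-1})\,\sqrt{\det\big((\diffd k_2^{-1})\trans\diffd k_2^{-1}\big)}\dx[\uplambda_2].
\end{multline*}
In particular, taking $g\equiv 1$, the local surface measure of a subset of a chart domain is the same no matter which chart is used to compute it.

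\emph{Step 3 (partition independence).} Let $(\Gamma_i,k_i,\alpha_i)_{i=1}^{m}$ and $(\Gamma_j',k_j',\beta_j)_{j=1}^{n}$ be two admissible data sets, i.e.\ finite covers of $\partial\Omega$ by chart domains together with subordinate partitions of unity on $\partial\Omega$, and denote by $\mu$, $\mu'$ the resulting surface measures. For measurable $\Upsilon\subseteq\partial\Omega$, using $\sum_{j}\beta_j\equiv 1$ and linearity of the integral,
\begin{align*}
  \mu(\Upsilon)
  &= \sum_{i=1}^{m}\int_{k_i(\Upsilon\cap\Gamma_i)}(\alpha_i\circ k_i^{-1})\,\sqrt{\det\big((\diffd k_i^{-1})\trans\diffd k_i^{-1}\big)}\dx[\uplambda_2] \\
  &= \sum_{i=1}^{m}\sum_{j=1}^{n}\int_{k_i(\Upsilon\cap\Gamma_i\cap\Gamma_j')}\big((\alpha_i\beta_j)\circ k_i^{-1}\big)\,\sqrt{\det\big((\diffd k_i^{-1})\trans\diffd k_i^{-1}\big)}\dx[\uplambda_2].
\end{align*}
Each $\alpha_i\beta_j$ has compact support in $\Gamma_i\cap\Gamma_j'$, so Step 2 (with $g=\alpha_i\beta_j$ and $\Upsilon$ replaced by $\Upsilon\cap\Gamma_i\cap\Gamma_j'$) lets us replace $k_i$ by $k_j'$ in every summand; summing over $i$ and using $\sum_i\alpha_i\equiv 1$ then collapses the double sum to $\mu'(\Upsilon)$. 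Hence $\mu=\mu'$. (If the surface measure is instead assembled from a disjoint measurable partition of $\partial\Omega$ subordinate to the covers, the same argument applies with $\alpha_i\beta_j$ replaced by the indicator of the corresponding intersection cell.)

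The only genuine work is the measure-theoretic bookkeeping in Steps 1–2: one must invoke Rademacher's theorem for the Lipschitz transition map, the fact that bi-Lipschitz maps preserve $\uplambda_2$-null sets (so that the $\uplambda_2$-a.e.\ identities survive the change of variables), and the change-of-variables theorem for bi-Lipschitz maps — equivalently, the area formula. I expect no conceptual obstacle beyond this; the lower bound $\sqrt{\det((\diffd k^{-1})\trans\diffd k^{-1})}\ge 1$ from \Cref{le:functional-determinant-of-chart} guarantees that all integrals occurring above are well defined and that the manipulations are legitimate.
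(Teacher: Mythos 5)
Your proposal is correct and follows essentially the same route as the paper: the transition map $k_1\circ k_2^{-1}$ is bi-Lipschitz, the chain rule yields the determinant identity $\abs{\det\diffd\psi}\,\big(\sqrt{\det((\diffd k_1^{-1})\trans\diffd k_1^{-1})}\circ\psi\big)=\sqrt{\det((\diffd k_2^{-1})\trans\diffd k_2^{-1})}$, and the bi-Lipschitz change-of-variables formula closes the argument. You merely spell out the measure-theoretic fine print (Rademacher, null-set preservation) and the partition-refinement step that the paper compresses into one sentence.
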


\begin{proof}
  It is enough to show that two charts $k_{1}\colon \Gamma_{1} \to U_{1}$ and $k_{2}\colon \Gamma_{2} \to U_{2}$ with intersecting chart domains define the same surface measure on the intersection $\Gamma_{1} \cap \Gamma_{2}$. The rest can be done by intersecting the two partitions.

  We define the mapping
  \begin{align*}
    T \colon \mapping{k_{2}(\Gamma_{1} \cap \Gamma_{2}) \subseteq U_{2}}{k_{1}(\Gamma_{1} \cap \Gamma_{2}) \subseteq U_{1}}{x}{(k_{1} \circ k_{2}^{-1})(x),}
  \end{align*}
  which gives a bijective bi-Lipschitz continuous mapping. Note that by the chain rule we have
  \begin{align*}
    \diffd k_{2}^{-1} = \diffd (k_{1}^{-1} \circ k_{1} \circ k_{2}^{-1})
    = (\diffd k_{1}^{-1}) \circ (k_{1} \circ k_{2}^{-1}) \diffd (k_{1} \circ k_{2}^{-1})
    %= (\diffd k_{1}^{-1}) \circ (k_{1} \circ k_{2}^{-1}) \diffd T
    = (\diffd k_{1}^{-1}) \circ T \diffd T.
  \end{align*}
  Moreover, by properties of the determinant we have
  \begin{align*}
    \abs{\det \diffd T}  \sqrt{\det(\diffd k_{1}^{-1} \circ T)\trans (\diffd k_{1}^{-1} \circ T)}
    &= \sqrt{\det (\diffd T)\trans \diffd T}  \sqrt{\det(\diffd k_{1}^{-1} \circ T)\trans (\diffd k_{1}^{-1} \circ T)} \\
    &= \sqrt{\det (\diffd T)\trans (\diffd k_{1}^{-1} \circ T)\trans (\diffd k_{1}^{-1}\circ T) \diffd T} \\
    &= \sqrt{\det ((\diffd k_{1}^{-1} \circ T)\diffd T)\trans ((\diffd k_{1}^{-1}\circ T) \diffd T)} \\
    &= \sqrt{\det (\diffd k_{2}^{-1})\trans \diffd k_{2}^{-1}}.
  \end{align*}
  Now for $\Upsilon \subseteq \Gamma_{1} \cap \Gamma_{2}$ we have by change of variables
  \begin{align*}
    \int_{k_{1}(\Upsilon)} \sqrt{\det (\diffd k_{1}^{-1})\trans \diffd k_{1}^{-1}} \dx[\uplambda_{d-1}]
    &= \int_{T^{-1}(k_{1}(\Upsilon))} \sqrt{\det (\diffd k_{1}^{-1})\trans \diffd k_{1}^{-1}}\circ T \abs{\det \diffd T} \dx[\uplambda_{d-1}] \\
    &= \int_{k_{2}(\Upsilon)} \sqrt{\det (\diffd k_{2}^{-1})\trans \diffd k_{2}^{-1}} \dx[\uplambda_{d-1}].
  \end{align*}
  Hence, the surface measure $\mu(\Upsilon)$ is independent of the charts.
\end{proof}

\section{Some auxiliary lemmas}

\begin{lemma}\label{le:determinant-of-I-plus-vvT}
  Let $v \in \R^{d}$ then
  \begin{equation*}
    \det(I + vv\trans) = 1 + \norm{v}^{2}.
  \end{equation*}
\end{lemma}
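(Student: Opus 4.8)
The plan is to read off the determinant from the spectrum of the rank-one symmetric matrix $vv\trans$. First I would dispose of the degenerate case $v = 0$, where the claim is merely $\det I = 1 = 1 + \norm{v}^2$. For $v \neq 0$, note that $vv\trans$ is symmetric and has rank one: $v$ itself is an eigenvector with eigenvalue $v\trans v = \norm{v}^2$, and every vector orthogonal to $v$ is annihilated by $vv\trans$, so the orthogonal complement of $\spn\set{v}$ is the kernel. Extending $v/\norm{v}$ to an orthonormal basis of $\R^{d}$ therefore produces an orthogonal matrix $Q$ with $Q\trans (vv\trans) Q = \operatorname{diag}(\norm{v}^2, 0, \dots, 0)$.

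Then $Q\trans(I + vv\trans)Q = I + Q\trans(vv\trans)Q = \operatorname{diag}(1 + \norm{v}^2, 1, \dots, 1)$, and since the determinant is invariant under conjugation by the orthogonal matrix $Q$, the determinant of $I + vv\trans$ equals the product of the diagonal entries of this diagonal matrix, i.e.\ $\det(I + vv\trans) = (1 + \norm{v}^2)\cdot 1 \cdots 1 = 1 + \norm{v}^2$, which is the assertion.

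There is essentially no obstacle here; the only point deserving a word of care is the case $v = 0$, which the eigenvalue argument subsumes automatically (the spectrum of $I + vv\trans$ is then simply $1,\dots,1$). If one prefers an even shorter route, one may instead invoke the matrix determinant lemma $\det(A + uw\trans) = \det A\,(1 + w\trans A^{-1} u)$ with $A = I$ and $u = w = v$, or expand the determinant of the bordered matrix $\begin{bsmallmatrix} I & -v \\ v\trans & 1 \end{bsmallmatrix}$ by Schur complements with respect to the two diagonal blocks, each expansion yielding one side of the claimed identity.
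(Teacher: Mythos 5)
Your argument is correct and is essentially the paper's own proof: both diagonalize $I + vv\trans$ using $v$ together with an orthonormal basis of $\set{v}^{\perp}$ as eigenvectors, and read off the determinant as the product of the eigenvalues $1+\norm{v}^{2},1,\dots,1$. The explicit treatment of $v=0$ and the alternative routes you mention are fine but not needed.
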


\begin{proof}
  Note that the determinant of a matrix equals the product of all eigenvalues. Let $b_{1},\dots, b_{d-1}$ denote an orthonormal basis of $\set{v}^{\perp}$. Then we can easily see that each $b_{i}$ is an eigenvector of $I + vv\trans$ with eigenvalue $1$. Furthermore, $(I + vv\trans) v = (1 + \norm{v}^{2})v$ implies that $v$ is an eigenvector with eigenvalue $1+\norm{v}^{2}$. Hence, we have found all eigenvalues and consequently the determinant equals $1 + \norm{v}^{2}$.
\end{proof}

\begin{lemma}\label{le:tantr-ortho-projetion-on-tangential-space}
  For $w \in \C^{3}$ with $\norm{w} = 1$ the mapping $A\colon v \mapsto (w \times v) \times w$ is the orthogonal projection on the orthogonal complement of $\spn\set{w}$.
\end{lemma}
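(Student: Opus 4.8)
The plan is to rewrite $A$ in closed form via the vector triple product identity and then recognize the resulting map as the orthogonal projection onto $(\spn\set{w})^{\perp}$.

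First, recall the Grassmann identity $a \times (b \times c) = (a\cdot c)\, b - (a\cdot b)\, c$ for $a,b,c \in \C^{3}$, where $a \cdot b = \sum_{j} a_{j}b_{j}$ is the symmetric bilinear product; this is a direct computation over $\R^{3}$ and extends to $\C^{3}$ since both sides are trilinear. Applying it with $a = c = w$ and $b = v$, and using $w \times v = -(v \times w)$, gives
\[
  A v = (w \times v) \times w = -\,w \times (w \times v) = (w \cdot w)\, v - (w \cdot v)\, w = v - \scprod{v}{w}\, w,
\]
where in the last step we used $w \cdot w = \norm{w}^{2} = 1$ and $w \cdot v = \scprod{v}{w}$ (the relevant unit vector $w = \nu$ being real).

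It then remains to check that $v \mapsto v - \scprod{v}{w}\,w$ is the orthogonal projection onto $\{w\}^{\perp} = (\spn\set{w})^{\perp}$. The rank-one map $P\colon v \mapsto \scprod{v}{w}\,w$ is linear, satisfies $P^{2} = P$ because $\scprod{w}{w} = 1$, is self-adjoint because $\scprod{\scprod{v}{w}w}{u} = \scprod{v}{w}\scprod{w}{u} = \scprod{v}{\scprod{u}{w}w}$, and has range $\spn\set{w}$ and kernel $\{w\}^{\perp}$; hence $P$ is the orthogonal projection onto $\spn\set{w}$, and $A = \idop - P$ is the orthogonal projection onto its orthogonal complement. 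Equivalently, writing $v = \lambda w + v^{\perp}$ with $v^{\perp} \perp w$, one reads off from the formula above that $A(\lambda w) = \lambda (w \times w) \times w = 0$ and $A v^{\perp} = v^{\perp} - \scprod{v^{\perp}}{w} w = v^{\perp}$.

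There is no serious obstacle here: the entire content is the Grassmann identity, after which the claim reduces to the textbook characterization of an orthogonal projection. The only point that genuinely uses the hypothesis is that $\norm{w} = 1$ makes $P$ idempotent with range exactly $\spn\set{w}$, so that $A$ projects onto the full orthogonal complement rather than onto a proper subspace of it.
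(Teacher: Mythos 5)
Your proof is correct and follows essentially the same route as the paper: both expand $(w\times v)\times w = -\,w\times(w\times v)$ into the closed form $v-(w\cdot v)\,w=(I-ww\trans)v$ and then identify this as the orthogonal projection onto $\set{w}^{\perp}$; you obtain the expansion from the Grassmann identity where the paper squares the skew-symmetric cross-product matrix explicitly. Your side remark that the argument really uses the bilinear product $w\cdot w$ rather than the Hermitian norm, so that the statement is only literally an orthogonal projection for real $w$ (as in the application $w=\nu$), flags a subtlety that the paper's own proof glosses over in exactly the same way.
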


\begin{proof}
  Note that $(w\times v) \times w = -w \times (w \times v)$ and
  $w \times v =
  \begin{bsmallmatrix}
    0 & -w_{3} & w_{2} \\
    w_{3} & 0 & -w_{1} \\
    -w_{2} & w_{1} & 0
  \end{bsmallmatrix}
  v$. Therefore,
  \begin{align*}
    (w \times v) \times w &=
    -
    \begin{bmatrix}
      0 & -w_{3} & w_{2} \\
      w_{3} & 0 & -w_{1} \\
      -w_{2} & w_{1} & 0
    \end{bmatrix}^{2}
    v
    =
    \begin{bmatrix}
      w_{2}^{2} + w_{3}^{2} & -w_{1} w_{2} & -w_{1} w_{3} \\
      -w_{1} w_{2} & w_{1}^{2} + w_{3}^{2} & -w_{2} w_{3} \\
      -w_{1} w_{3} & -w_{2} w_{3} & w_{1}^{2} + w_{2}^{2}
    \end{bmatrix}
    v \\
    \intertext{Since $\norm{w} = 1$ we further have}
                          &=
                            \left(
                            \begin{bmatrix}
                              1 & 0 & 0 \\
                              0 & 1 & 0 \\
                              0 & 0 & 1
                            \end{bmatrix}
                            -
                            \begin{bmatrix}
                              w_{1}^{2} & w_{1}w_{2} & w_{1} w_{3} \\
                              w_{1} w_{2} & w_{2}^{2} & w_{2} w_{3} \\
                              w_{1} w_{3} & w_{2} w_{3} & w_{3}^{2}
                            \end{bmatrix}
                            \right)
                            v
                            =
                            (I - ww\trans)v,
  \end{align*}
  which shows the claim.
\end{proof}

\begin{lemma}\label{le:AAdagger-ortho-projection}
  Let $A$ be an injective matrix and $A^{\dagger} = (A\trans A)^{-1} A\trans$ its Moore-Penrose inverse. Then $A A^{\dagger}$ is the orthogonal projection on $\ran A$.
\end{lemma}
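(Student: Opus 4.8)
The plan is to show directly that $P \coloneqq A A^{\dagger} = A(A\trans A)^{-1} A\trans$ has the three properties that characterize the orthogonal projection onto $\ran A$: it is well defined and idempotent, it is symmetric, and its range equals $\ran A$ (equivalently, $P$ acts as the identity on $\ran A$). Indeed, a matrix that is symmetric, idempotent, and has range $\ran A$ is by definition the orthogonal projection onto $\ran A$, so verifying these three points suffices.

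First I would check that $A\trans A$ is invertible, which is the only place the injectivity of $A$ enters: if $A\trans A x = 0$, then $\norm{Ax}^{2} = x\trans A\trans A x = 0$, so $Ax = 0$ and hence $x = 0$; being a square matrix with trivial kernel, $A\trans A$ is invertible, so $P$ is well defined. Idempotency then follows by cancellation, $P^{2} = A(A\trans A)^{-1}(A\trans A)(A\trans A)^{-1} A\trans = A(A\trans A)^{-1} A\trans = P$, and symmetry is immediate from $(A\trans A)\trans = A\trans A$, which gives $P\trans = A\big((A\trans A)^{-1}\big)\trans A\trans = A(A\trans A)^{-1} A\trans = P$. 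For the range, the inclusion $\ran P \subseteq \ran A$ is clear from the left factor $A$, while for any $x = Ay \in \ran A$ one computes $Px = A(A\trans A)^{-1}(A\trans A) y = Ay = x$, so $\ran A \subseteq \ran P$ and moreover $P$ restricts to the identity on $\ran A$.

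Since every step is elementary linear algebra, I do not expect a genuine obstacle here; the one point worth stating explicitly is the invertibility of $A\trans A$, which is exactly where the hypothesis that $A$ is injective is needed.
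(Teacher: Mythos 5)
Your proof is correct. It differs from the paper's argument only in which characterization of the orthogonal projection it verifies: you check that $P = AA^{\dagger}$ is symmetric, idempotent, and restricts to the identity on $\ran A$ with $\ran P = \ran A$, whereas the paper instead shows $\ker(AA^{\dagger}) = \ker A\trans = (\ran A)^{\perp}$ together with $AA^{\dagger}A = A$, and concludes from the decomposition of the space into $\ran A$ and $(\ran A)^{\perp}$. Both routes are elementary and equally valid; yours has the small advantage of making the symmetry of $P$ explicit (which the paper in fact uses later, in the proof of \Cref{le:characterization-of-tangrad}, without rederiving it) and of spelling out why $A\trans A$ is invertible, a point the paper leaves implicit. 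The paper's version is marginally shorter since it never needs to form $P^{2}$ or $P\trans$.
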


\begin{proof}
  Note that $\ker A\trans = (\ran A)^{\perp}$, $\ker A = (\ran A\trans)^{\perp}$, and $\ker A^{\dagger} = \ker A\trans$. Therefore, $\ker AA^{\dagger} = \ker A\trans = (\ran A)^{\perp}$. Moreover,
  \begin{align*}
    % (AA^{\dagger})^{2} = A(A\trans A)^{-1} A\trans A (A\trans A)^{-1} A\trans = A(A\trans A)^{-1} A\trans
    AA^{\dagger}A = A(A\trans A)^{-1} A\trans A = A,
  \end{align*}
  which implies that the $\ran A$ is invariant under $AA^{\dagger}$. Consequently $AA^{\dagger}$ is an orthogonal projection on $\ran A$.
\end{proof}

\bibliographystyle{alphaurl}
\bibliography{bibfile}{}

\end{document}